\newcommand{\change}{}
\numberwithin{equation}{section}
\newtheorem{thm}{Theorem}[section]
\newtheorem{lem}[thm]{Lemma}
\newtheorem{prop}[thm]{Proposition}
\newtheorem{cor}[thm]{Corollary}
\newtheorem{rem}[thm]{Remark}
\newcommand{\ind}{\mathbf{1}}
\newcommand{\R}{\mathbb{R}}
\newcommand{\Z}{\mathbb{Z}}
\newcommand{\N}{\mathbb{N}}
\renewcommand{\tilde}{\widetilde}
\newcommand{\cF}{{\ensuremath{\mathcal F}} }
\newcommand{\cP}{{\ensuremath{\mathcal P}} }
\newcommand{\cC}{{\ensuremath{\mathcal C}} }
\newcommand{\cL}{{\ensuremath{\mathcal L}} }
\newcommand{\cW}{{\ensuremath{\mathcal W}} }
\newcommand{\cM}{{\ensuremath{\mathcal M}} }
\newcommand{\bP}{{\ensuremath{\mathbf P}} }
\newcommand{\bE}{{\ensuremath{\mathbf E}} }
\DeclareMathSymbol{\leqslant}{\mathalpha}{AMSa}{"36} 
\DeclareMathSymbol{\geqslant}{\mathalpha}{AMSa}{"3E} 
\DeclareMathSymbol{\eset}{\mathalpha}{AMSb}{"3F}     
\renewcommand{\leq}{\;\leqslant\;}                   
\renewcommand{\geq}{\;\geqslant\;}                   
\newcommand{\dd}{\text{\rm d}}             
\newcommand{\bbE}{{\ensuremath{\mathbb E}} }
\newcommand{\bbP}{{\ensuremath{\mathbb P}} }
\newcommand{\bbR}{{\ensuremath{\mathbb R}} }
\newcommand{\bbT}{{\ensuremath{\mathbb T}} }
\newcommand{\norm}[1]{\left\lVert#1\right\rVert}
\newcommand{\cutnorm}[2]{d_{\square}\left(#1,#2\right)}
\def\captionfont@{\footnotesize}
\def\captionheadfont@{\scshape}
\long\def\@makecaption#1#2{%
  \vspace{2mm}
  \setbox\@tempboxa\vbox{\color@setgroup
    \advance\hsize-6pc\noindent
    \captionfont@\captionheadfont@#1\@xp\@ifnotempty\@xp
        {\@cdr#2\@nil}{.\captionfont@\upshape\enspace#2}%
    \unskip\kern-6pc\par
    \global\setbox\@ne\lastbox\color@endgroup}%
  \ifhbox\@ne 
    \setbox\@ne\hbox{\unhbox\@ne\unskip\unskip\unpenalty\unkern}%
  \fi
  \ifdim\wd\@tempboxa=\z@ 
    \setbox\@ne\hbox to\columnwidth{\hss\kern-6pc\box\@ne\hss}%
  \else 
    \setbox\@ne\vbox{\unvbox\@tempboxa\parskip\z@skip
        \noindent\unhbox\@ne\advance\hsize-6pc\par}%
\fi
  \ifnum\@tempcnta<64 
    \addvspace\abovecaptionskip
    \moveright 3pc\box\@ne
  \else 
    \moveright 3pc\box\@ne
    \nobreak
    \vskip\belowcaptionskip
  \fi
\relax
}
\def\writefig#1 #2 #3 {\rlap{\kern #1 truecm
\raise #2 truecm \hbox{#3}}}
\title[Interacting Oscillators on Dense Random Graphs]{Weakly Interacting Oscillators \\ on Dense Random Graphs}
\author{Gianmarco Bet}
\address{
  Dipartimento di Matematica e Informatica ``Ulisse Dini", Università degli Studi di Firenze, Firenze, Italy
}
\author{Fabio Coppini}
\address{
	Dipartimento di Matematica e Informatica ``Ulisse Dini", Università degli Studi di Firenze, Firenze, Italy
}
\author{Francesca R.~Nardi}
\address{
  Dipartimento di Matematica e Informatica ``Ulisse Dini", Università degli Studi di Firenze, Firenze, Italy, and
  Department of Mathematics and Computer Science, Eindhoven University of Technology, Eindhoven, the Netherlands
}
\begin{document}

\begin{abstract}
	We consider a class of weakly interacting particle systems of mean-field type. The interactions between the particles are encoded in a graph sequence, i.e., two particles are interacting if and only if they are connected in the underlying graph. We establish a Law of Large Numbers for the empirical measure of the system that holds whenever the graph sequence is convergent to a graphon. The limit is the solution of a non-linear Fokker-Planck equation weighted by the (possibly random) graphon limit. In contrast with the existing literature, our analysis focuses on both deterministic and random graphons: no regularity assumptions are made on the graph limit and we are able to include general graph sequences such as exchangeable random graphs. Finally, we identify the sequences of graphs, both random and deterministic, for which the associated empirical measure converges to the classical McKean-Vlasov mean-field limit.
	\\
	\\
	\textit{2020 MSC:} 60K35,  05C80, 82B20, 60H20, 35Q84.
	\\
	\\
	\textit{Keywords:} Interacting oscillators, random graphons, mean-field systems, Fokker-Planck equation, exchangeable graphs, McKean-Vlasov.
\end{abstract}

\dedicatory{To our friend and colleague Carlo Casolo}

\maketitle

\section{Introduction, Organization and Set-up}

In the last twenty years there has been a growing interest in complex networks and inhomogeneous particle systems. The classical mean-field framework (see, e.g., \cite{cf:Oel84,cf:szni}) in which all the particles are connected with each other, has been extended to include interactions described by general networks. In these more general models, the interaction between two particles depends on the weight of the edge connecting the two in an underlying network. {\change Applications of these models include mean-field games \cite{cf:CarmonaI, cf:CH18},  synchronization phenomena \cite{cf:Arenas}, neuroscience \cite{cf:RTPK16}, and statistical mechanics \cite{van_enter_gibbsianness_2009} among others.}

The first mathematically rigorous results {\change on McKean-Vlasov particle systems and graphs appeared only recently \cite{cf:BBW,cf:DGL}: they consider certain graph sequences with diverging average degree. Under suitable homogeneity conditions on the degrees, the system is described by the classical mean-field limit \cite{cf:szni} as the number of particles tends to infinity.} The cited works leave several relevant questions unanswered: is it possible to characterize the graph sequences for which the system converges to the mean-field limit? How sensitive is the system dynamics to the degree inhomogeneity in the underlying graph? {\change The existing literature, see, e.g., \cite{cf:BCW20, cf:CDG,cf:L18}, focuses on a large class of particle systems and different graph regimes, i.e., from dense to almost sparse graphs, yet they are unable to fully include the well-known class of exchangeable random graphs \cite{cf:DJ08} in their result.
  
Our aim is to prove that a Law of Large Numbers for weakly interacting particle systems can be proven for any dense random graph sequence, including exchangeable random graphs. To keep the focus on the random graph limit viewpoint, we restrict the class of interacting particles to weakly interacting oscillators, a class of systems that shows a remarkably rich behavior \cite{cf:Arenas, cf:RTPK16}. We note that, at the cost of more technicalities, our result could be extended to more general particle systems.}
The main result of this work is a Law of Large Numbers for the empirical measure under the only assumption that the graph sequence converges in probability in the space of graphons, see \cite{cf:Lov} for a complete tour on graphons. Namely, we tackle the challenging case of a random graph limit, which includes pseudo-random graphs (see, e.g., \cite{cf:BBCN16,cf:CGW89}) and exchangeable random graphs (see, e.g., \cite{cf:DJ08}). To the authors' knowledge, this is the first result in the literature that explicitly links {\change random graph limits to empirical measures}.

As a byproduct, we are able to identify deterministic and random graph sequences for which the particle system behavior is approximately mean-field. We also provide an example of a particle system undergoing a phase transition, where the critical threshold depends on the randomness of the graph limit, see Proposition \ref{p:GPA} and the example below.

\subsection{Unlabeled graphons and empirical measures}

To better illustrate our contribution, a clarification on the term graphon is in order. {\change In the literature of McKean-Vlasov particle systems, the term \emph{graphon} is usually employed for what the classical graph limits theory (we refer to \cite{cf:Lov, cf:DJ08}) indicates as \emph{labeled graphon}, i.e.,} a symmetric, bounded and measurable function $W: [0,1] \times [0,1] \to \R$. The space $[0,1]$ is known as the label space and it is used as particle index: roughly speaking, every $x \in [0,1]$ corresponds to a particle and the interaction strength between two particles $x$ and $y$ is given by the value of $W(x,y)$.

The mathematical space of graphons is not merely given by labeled graphons but constructed via certain equivalence classes, i.e.,
\begin{equation*}
    \tilde{W} = \{W^\varphi \text{ with } \varphi : [0,1] \to [0,1] \text{ measure-preserving map}\},
\end{equation*}
where $W^\varphi (x, y) = W (\varphi(x), \varphi(y))$ for every $x$ and $y$ in $[0,1]$. The measure-preserving map $\varphi$ corresponds to a label permutation in a finite graph {\change (note that $\varphi$ need not be invertible in the infinite setting, see \cite[Proposition 7.10]{cf:Lov})}. In other words, $\tilde{W}$ contains the connection information of some labeled graphon $W$, while being independent of its labeling. Following \cite{cf:DJ08}, we refer to this object as  an \emph{unlabeled graphon}. The space of graphons is the space of unlabeled graphons with a suitable distance, see Subsection \ref{ss:graphons} for further details.

The notion of labeling is relevant when studying interacting particle systems. On the one hand, one is interested in understanding the correlation between two particles and needs their labels to be fixed to suitably describe it. On the other hand, one wants to describe the macroscopic behavior, e.g., by studying the empirical measure, and there the labels do not play any role. Hence, depending on the question one is interested in, the suitable modelling choice might be labeled or unlabeled graphs. From a graphon viewpoint, considering unlabeled graphons is not a technical recourse but, in fact, represents the stepping stone of the theory and the key ingredient to many of its interesting properties. As it will become clear later, when describing the dynamics of a particle population, the concept of labeled graphon is not satisfactory: different labeled graphons can lead to exactly the same dynamics!

Our contribution stems from the previous considerations. We refer to Section \ref{ss:literature} for a comparison with the existing literature.

\subsection{Organization}
We now present the set-up and notation used, as well as the various distances between probability measures that will considered in the sequel. We give a short introduction to graphons and the main tools needed for this work {\change in Section \ref{sec:setting}}.

In Section \ref{sec:model_main_results} we define the interacting particle system and the associated non-linear process. Existence, uniqueness and stability results for the non-linear process are presented right after, see Theorem \ref{thm:nonLin2}. Our main result, Theorem \ref{thm:conv}, is given in Subsection \ref{ss:convergence}. Exchangeable random graphs are then discussed together with a propagation of chaos result, see Subsection \ref{ss:exchangeable}. The following Subsection \ref{ss:examples} is devoted to the comparison with the classical mean-field behavior and to a few important consequences of Theorem \ref{thm:conv}; the discussion is supported by two explanatory examples.

In Section \ref{s:nonLin} we focus on the non-linear process. In particular, we discuss its relationship with other characterizations already known in the literature. The proofs of the main results for the non-linear process are given in Subsection \ref{pss:nonLin2}.

Section \ref{pss:empMes} contains the proof of Theorem \ref{thm:conv}. Finally, in Appendix \ref{annex:graph} we give a useful characterization of convergence in probability for random graph sequences.

\section{Setting and notations}\label{sec:setting}
We consider particle dynamics occurring on a finite time interval, say $[0,T]$, which we fix once and for all. We work on the filtered probability space $(\Omega, \cF, \{\cF_t\}_{t \in [0,T]}, P)$, where $\{\cF_\cdot\}$ is a filtration satisfying the usual conditions.

We use two different notations for expressing conditional probabilities: the one referring to Brownian motions and initial conditions is denoted by $\bP$, its expectation by $\bE$; the one referring to the randomness in the graph sequences, and/or in its limit object, is denoted by $\bbP$, its expectation by $\bbE$.

The interval $I:=[0,1]$ represents the space of (continuous) labels. {\change We study functions with values in the one-dimensional torus denoted by $\bbT:= \R/(2\pi\Z)$. The space of continuous functions from $[0,T]$ to $\bbT$ is denoted by $\cC([0,T],\bbT)$ and it is endowed with the supremum norm. For $\alpha>0$, the space of $\alpha$-H\"{o}lder continuous function from $\bbT^2$ to $\R$ is denoted by $\cC^{\alpha}(\bbT^2)$. Given a separable metric space $E$, the space of probability measures over $E$ is denoted by $\cP(E)$.}

The various constants throughout the paper are always denoted by $C$ or $C'$ and may vary from line to line. An explicit dependence on a parameter $\alpha$ will be denoted by $C_\alpha$.

\subsection{Distance between probability measures}

For two probability measures $\bar{\mu},\, \bar{\nu} \in \cP(\cC([0,T], \bbT))$, we define their distance by
\begin{equation}
\label{d:wass_T}
D_T (\bar{\mu},\bar{\nu}) := \inf_{m \in \gamma(\bar{\mu},\bar{\nu})} \left\{ \int \sup_{t \in [0,T]} \left|x_t -y_t\right|^2 m (\dd x, \dd y) \right\}^{1/2},
\end{equation}
where $\gamma(\bar{\mu},\bar{\nu})$ is the space of probability measures on $\cC([0,T], \bbT) \times \cC([0,T], \bbT)$ with first marginal equal to $\bar{\mu}$ and second marginal equal to $\bar{\nu}$. This definition coincides with the 2-Wasserstein distance between probability measures. The right-hand side of \eqref{d:wass_T} can be rewritten as
\begin{equation}
\label{d:wass_T2}
D_T (\bar{\mu},\bar{\nu})  = \inf_{X,Y} \left\{ \bE \left[ \sup_{t \in [0,T]} \left|X_t - Y_t\right|^2 \right] : \cL(X) = \bar{\mu}, \, \cL(Y) = \bar{\nu} \right\}^{1/2}
\end{equation}
where the infimum is taken on all random variables $X$ and $Y$ with values in $\cC([0,T], \bbT)$ and law $\cL$ equal to $\bar{\mu}$ and $\bar{\nu}$ respectively. From \eqref{d:wass_T} we obtain that for every $s \in[0,T]$
\begin{equation}
\label{d:lips_dist}
\sup_{f} \left|\int_\bbT f(\theta) \, \bar{\mu}_s (\dd \theta) - \int_\bbT f(\theta) \, \bar{\nu}_s (\dd \theta)\right| \leq D_s(\bar{\mu},\bar{\nu}),
\end{equation}
where the supremum is taken over all {\change 1-Lipschitz functions} from $\bbT$ to $\R$. Observe that these definitions make sense also with $T=0$ and $\cC([0,T], \bbT)$ replaced by $\bbT$.

\subsection{Distance between finite graphs}

We denote $[n]:=\{1,\dots,n\}$ for $n \in \N$. Let $\xi$ be a graph on $n$ vertices. With an abuse of notation, we let $\xi$ denote its adjacency matrix as well, i.e., $\xi = \{\xi_{ij}\}_{i,j \in [n]}$. We consider simple undirected graphs so that $\xi_{ij} =\xi_{ji}$  and $\xi_{ii} = 0$ for all $1 \leq i \leq j\leq n$.


Let $A=\{A_{ij}\}_{i,j\in[n]}$ be a $n\times n$ real matrix. The \textit{cut-norm} of $A$ {\change is a well-known norm for (not necessarily square) finite matrices (see, e.g., \cite{cf:AN06, cf:Lov})}, it is defined as
\begin{equation}
    \label{d:cutnorm}
    \norm{A}_{\square} := \frac 1{n^2} \max_{S,T \subset [n]} \left|  \sum_{i \in S, j\in T} A_{ij} \right|.
\end{equation}
It is well-known that this norm is equivalent to the $\ell_\infty\to \ell_1$ norm \cite{cf:AN06}
\begin{equation}
    \norm{A}_{\infty \to 1} := \sup_{s_i,t_j \in \{\pm 1\}} \sum_{i,j=1}^n A_{ij} s_i t_j.
\end{equation}
For two graphs $\xi$ and $\xi'$ on the same set of vertices, we define the distance $d_\square$ as
\begin{equation}
    \cutnorm{\xi}{\xi'} := \norm{\xi- \xi'}_{\square}.
\end{equation}

\subsection{Labeled and unlabeled graphons}
\label{ss:graphons}

{\change The following definitions come from the book of Lovász \cite{cf:Lov}, whose notation we adopt}.  Recall that $I=[0,1]$ and let 
\[
\cW := \{W:I^2 \to \R \text{ bounded symmetric and measurable}\}
\]
be the space of kernels, we tacitly consider two kernels to be equal if and only if the subset of $I^2$ where they differ has {\change zero Lebesgue measure on $I^2$}. A graphon is a kernel $W$ such that $0\leq W \leq 1$. Let $\cW_0$ denote the space of graphons. The cut-norm of $W\in\cW$ is defined as
\begin{equation}
    \label{d:cutnormW}
    \norm{W}_{\square} := \max_{S,T \subset I} \left| \int_{S\times T} W(x,y) \dd x \dd y \right|
\end{equation}
where the maximum is taken over all measurable subsets $S$ and $T$ of $I$. It is well known that $\norm{W}_\square$ is equivalent to the norm of $W$ seen as an operator from $L^\infty(I) \to L^1 (I)$  \cite[Theorem 8.11]{cf:Lov}. This is defined as
\begin{equation}
    \label{d:opNorm}
    \norm{W}_{\infty \to 1} := \sup_{\norm{g}_\infty \leq 1}  \norm{Wg}_1,
\end{equation}
where $(Wg)(x) :=\int_I W(x,y)g(y) \dd y$ for $x \in I$ and $g \in L^\infty (I)$.

The metric induced by $\norm{\cdot}_\square$, or equivalently by $\norm{\cdot}_{\infty \to 1}$, in the space of graphons $\cW_0$ is again denoted by $\cutnorm{\cdot}{\cdot}$. Definitions \eqref{d:cutnorm} and \eqref{d:cutnormW} are consistent in the sense that to each graph $\xi$ is associated a graphon $W_\xi \in \cW_0$ such that $\norm{\xi}_\square = \norm{W_\xi}_\square$. The graphon $W_\xi$ is usually defined a.e.~as
\begin{equation}
    \label{d:graphGraphon}
    W_\xi (x,y) = \sum_{i,j=1}^n \xi_{ij} \, \ind_{ [\frac{i-1}n , \frac in)\times [\frac {j-1}n \frac jn)} (x,y), \quad \text{ for } x,y \in I.
\end{equation}
%

Note that $W_\xi$ depends on the labeling of $\xi$. Indeed, different labelings of $\xi$ yield graphs which have large $d_\square$-distance in general. This motivates the definition of the so-called \textit{cut-distance}. For two graphs $\xi, \, \xi'$ with the same number of nodes, the cut-distance is defined as
\begin{equation}
    \label{d:cut-distanceGraph}
    \hat{\delta}_\square (\xi, \xi') := \, \min_{\hat{\xi}'} \, d_\square (\xi, \hat{\xi}'),
\end{equation}
where the minimum ranges over all labelings of $\xi'$. The cut-distance is also defined for graphons as follows. For two graphons $W,V \in \cW_0$, their cut-distance is
\begin{equation}
    \label{d:cut-distance}
    \delta_\square (W,V) := \, \min_{\varphi \in S_I} \, \cutnorm{W}{V^\varphi},
\end{equation}
where the minimum ranges over $S_I$ the space of invertible measure preserving maps from $I$ into itself and where $V^\varphi (x,y) := V(\varphi(x),\varphi(y))$ for $x,y \in I$.

\begin{rem}
    \label{rem:cut-distance}
    There are at least two ways to compare the graphs $\xi, \xi'$ as unlabeled objects: either by directly computing their distance $\hat{\delta}_\square$ or by computing the distance $\delta_\square$ between $W_\xi$ and $W_{\xi'}$. These turn out to be equivalent as the number of vertices tends to infinity \cite[Theorem 9.29]{cf:Lov}. Formally, for every two graphs $\xi, \xi'$ on $n$ vertices, it holds that
    \begin{equation}
        \delta_\square (W_\xi, W_{\xi'}) \leq \hat{\delta}_\square (\xi, \xi') \leq \delta_{\square} (W_\xi, W_{\xi'}) + \frac {17}{\sqrt{\log n}}.
    \end{equation}
    We always write $\delta_\square (\xi, \xi') :=\delta_{\square} (W_\xi, W_{\xi'})$.
\end{rem}

Contrary to $d_\square$, the cut-distance $\delta_\square$ is a pseudometric on $\cW_0$ since the distance between two different graphons can be zero. This leads to the definition of the \textit{unlabeled} graphon $\tilde{W}$ associated to $W$. For a graphon $W$, $\tilde{W}$ is defined as the equivalence class of $W$ including all $V\in \cW_0$ such that $\delta_\square(W,V)=0$. For notation's sake, we  drop both the superscript and the adjective unlabeled when the context is clear.

The quotient space obtained in such a way is denoted by $\tilde{\cW}_0$ and we refer to it as the space of unlabeled graphons. A celebrated result of graph limits theory is that $(\tilde{\cW}_0, \delta_\square)$ is a compact metric space \cite[Theorem 9.23]{cf:Lov}.

We are not going into the details of graph convergence for which we refer to the exhaustive reference \cite{cf:Lov}. We only recall that a sequence of graphs $\{\xi^{(n)}\}_{n \in \N}$ converges to the graphon $W \in \tilde{\cW}_0$ if and only if $\delta_\square(W_{\xi^{(n)}},W)\to 0$ as $n\to\infty$ \cite[Theorem 11.22]{cf:Lov}.


\section{The Models and Main Results}\label{sec:model_main_results}

\subsection{The models}
We introduce our two main models: a weakly interacting particle system \eqref{eq:g} and a non-linear process \eqref{eq:nonLin2}.


\medskip

\subsubsection{Weakly interacting oscillators on graphs}
Let $\{\xi^{(n)}\}_{n \in \N}$ be a sequence of undirected, labeled graphs. For $n \in \N$, the adjacency matrix of $\xi^{(n)}$ is given by the $n\times n$ symmetric matrix $\{\xi^{(n)}_{ij}\}_{i,j =1, \dots, n}$ where $\xi^{(n)}_{ij}=1$ whenever the vertices $i$ and $j$ are connected and $\xi^{(n)}_{ij}=0$ otherwise.
Let $\{\theta^{i,n} \}_{i=1,\dots,n}$ be the family of oscillators on $\bbT^n$ that satisfy
\begin{equation}
\label{eq:g}
\begin{cases}
\dd{\theta}_{t}^{i,n} =\, F(\theta^{i,n}_t) \dd t +  \frac{1}{n} \sum_{j=1}^{n} \, \xi^{(n)}_{ij} \, \Gamma (\theta_{t}^{i,n}, \theta_{t}^{j,n}) \dd t + \dd B_{t}^{i}, \quad &0<t < T,\\
\; \; \theta_0^{i,n} =\, \theta_0^i,   &i \in  \{1,\dots,n\},
\end{cases}
\end{equation}
where $F$ and $\Gamma$ are bounded, uniformly Lipschitz functions and $\left\{B^i\right\}_{i\in\N}$ a sequence of independent and identically distributed (IID) Brownian motions on $\bbT$. The initial conditions $\left\{\theta_0^i \right\}_{i\in\N}$ are IID random variables sampled from some probability distribution $\bar{\mu}_0 \in \cP(\bbT)$ which is fixed once {\change and} for all.

\begin{rem}
  {\change
  At the cost of more technical details, one can relax the hypothesis on the initial conditions and require independent but not identically distributed $\{\theta^i_0\}_{i \in \N}$. We rather keep the proofs simple, following the classical propagation of chaos arguments, see, e.g., \cite{cf:szni}.
}
\end{rem}
  
Many interesting examples of interacting oscillators fit this framework such as the Kuramoto model, the plane rotator model and other generalizations, see, e.g., \cite[\S1.2]{cf:DGL}, \cite{cf:BBCN16} and also Subsection \ref{ss:examples}. {\change The applications of interacting oscillators range from modelling synchronisation phenomena, see, e.g., \cite{cf:Arenas}, to  neuroscience, e.g., \cite{cf:RTPK16}, as well as for studying statistical mechanics properties of complex systems \cite{van_enter_gibbsianness_2009}.}

We are interested in studying the empirical measure associated to \eqref{eq:g}. This is defined as the (random) probability measure on $\bbT$ such that
\begin{equation}
\label{d:emp}
\mu^n_t : = \frac 1n \sum_{j=1}^n \delta_{\theta^{j,n}_t},
\end{equation}
for every $t\in[0,T]$. {\change Alternatively, one can also consider the empirical measure $\mu^n$ of the trajectories, i.e., on $\mu^n = \frac 1n \sum_{j=1}^n \delta_{\theta^{j,n}}\in \cP(\cC([0,T], \bbT))$, where $\theta^{j,n} = (\theta^{j,n}_t)_{t\in[0,T]} \in \cC([0,T], \bbT)$ is the trajectory on $[0,T]$ of particle $j$ for $j=1, \dots, n$. The two are equally considered in the literature, see, e.g., \cite[Remark 1.1]{cf:CDG}, for the relation between the two.}

\subsubsection{The non-linear process}

Fix a graphon $W \in \tilde{\cW}_0$ and a uniform random variable $U$ on $I$. Consider the solution $\theta = \{\theta_t\}_{t \in [0,T]}$ to the following system
\begin{equation}
\label{eq:nonLin2}
\begin{cases}
    \, \theta_t = \theta_0 + \int_0^t F(\theta_s) \dd s + \int_0^t \int_I W(U,y) \int_\bbT \Gamma(\theta_s, \theta) \mu^y_s (\dd \theta) \dd y \, \dd s + B_t, \\
    \mu^y_t = \cL (\theta_t | U=y), \quad \text{ for }y \in I, \, t \in [0,T],
\end{cases}
\end{equation}
where {\change the initial condition $\theta_0$ has law $\cL (\theta_0)=\bar{\mu}_0$ and is taken to be independent of $U$. The Brownian motion $B$ is independent of the previous sequence $\{B^i\}_{i \in \N}$, of $U$, of the initial condition $\theta_0$ and also independent of the sequence $\xi^{(n)}$.}

The next theorem on existence and uniqueness of the solution to equation \eqref{eq:nonLin2} is proven in Section \ref{s:nonLin}, together with well-posedness with respect to $W$. For the latter, see Remark \ref{rem:varphi}.

\begin{thm}
    \label{thm:nonLin2}
    Suppose that $F$ and $\Gamma$ are bounded, uniformly Lipschitz functions. For every uniform random variable $U$ on $I$, there exists a unique {\change pathwise} solution to \eqref{eq:nonLin2}.
    
    If $\bar{\mu} \in \cC([0,T], \cP(\bbT))$ denotes the law of the solution $\theta = \{\theta_t\}_{t \in [0,T]}$ and $\mu^x$ the law of $\theta$ conditioned on $U=x$, then $\bar{\mu}$ solves the following non-linear Fokker-Planck equation in the weak sense
    \begin{equation}
        \label{eq:pde_mean}
        \partial_t \bar{\mu}_t (\theta) = \frac 12 \partial^2_\theta \bar{\mu}_t (\theta) - \partial_\theta \left[\bar{\mu}_t(\theta) F(\theta) \right] - \partial_\theta \left[ \int_{I\times I} W(x,y) \, \mu^x_t (\theta) \int_\bbT \Gamma(\theta, \tilde{\theta}) \, \mu^y_t(\dd \tilde{\theta}) \dd y \, \dd x \right],
    \end{equation}
    with initial condition $\bar{\mu}_0 \in \cP(\bbT)$.
\end{thm}

\subsection{Main result}
\label{ss:convergence}

We are now able to present our main result. Afterwards, we present an application to exchangeable random graphs and a propagation of chaos result.

\begin{thm}
    \label{thm:conv}
    Assume the hypothesis of Theorem \ref{thm:nonLin2} and further suppose that $\Gamma \in \cC^{1+\varepsilon}(\bbT^2)$ for some $\varepsilon>0$. Let $\{\xi^{(n)}\}_{n \in \N}$ be a sequence of random graphs. Assume that there exists a random variable $W$ in $\tilde{\cW}_0$ to which $\xi^{(n)}$ converges in $\bbP$-probability, or equivalently such that
    \begin{equation}
    \label{d:graphConv}
    \lim_{n \to \infty} \bbE \left[ \delta_\square \left(\xi^{(n)}, W \right)\right] = 0.
    \end{equation}
    Suppose that the initial conditions $\{\theta^i_0\}_{i \in \N}$ are IID random variables with law $\bar{\mu}_0$ and that are independent of $\{\xi^{(n)}\}_{n \in \N}$. Then,
    \begin{equation}
    \label{eq:convergence}
    \mu^n \longrightarrow \bar{\mu}, \quad \text{in $\bP \times \bbP$-probability, as $n \to \infty$},
    \end{equation}
    where the convergence is in $\cP(\cC([0,T], \bbT)))$ and $\bar{\mu}$ is a random variable depending only on the randomness of $W$, i.e., for almost every $\omega \in \Omega$, $\bar{\mu} (\omega)$ solves equation \eqref{eq:pde_mean} starting from $\bar{\mu}_0$, with graphon $W(\omega)$.
\end{thm}

The hypothesis \eqref{d:graphConv} on the graph convergence is very general. Indeed, it holds for any $\xi^{(n)}$ in $\cW$, meaning that $\xi^{(n)}$ can take values in $\R$ rather than $\{0,1\}$, recall {\change the definition of kernels in} subsection \ref{ss:graphons}. Moreover, the sequence $\xi^{(n)}$ can be deterministic or random; in case it is random, we also cover the convergence in probability to a random graphon limit $W$. In this last case, well-posedness and measurability of equation \eqref{eq:pde_mean} are granted by Theorem \ref{thm:sta2}.



Looking at the proof of Theorem \ref{thm:conv}, we remark that, if the limiting graphon $W$ is deterministic, the initial conditions $\{\theta^i_0\}_{i \in \N}$ can depend on the graph sequence $\{\xi^{(n)}\}_{n\in \N}$. In other words, Theorem \ref{thm:conv} also holds if $\{\theta^i_0\}_{i \in \N}$ is independent of the randomness in $W$ but not necessary on the whole sequence $\{\xi^{(n)}\}_{n\in \N}$. The relationship between the randomness left in $W$ and the randomness in $\xi^{(n)}$ is further discussed in Subsection \ref{ss:examples}.

{\change Requiring that $\Gamma \in \cC^{1+\varepsilon}(\bbT^2)$ for $\varepsilon>0$ has no specific physical meaning. It is a technical assumption to ensure the convergence of estimates involving the Fourier coefficients of $\Gamma$, we refer to the proof of Theorem \ref{thm:conv} in Section \ref{pss:empMes}.}

\subsection{Comparison with the existing literature}
\label{ss:literature}

Weakly interacting particle systems on graph sequences converging to graphons have already been considered in a series of works, both in the stochastic setting \cite{cf:BCW20, cf:L18, cf:RO} and in the deterministic one \cite{cf:CM19, cf:medvedev}. Most of these results {\change vary in terms of the class of particle systems considered, the notion of convergence of the graph sequence or the hypothesis on the initial condition. To our knowledge, no result in the literature considers graph sequences issued from random graphons. In addition, we are not aware of any result directly using the cut-distance as we do.
  
We focus on dense graph sequences where, roughly speaking, the average degree in the graph is proportional to the number of vertices. All the cited results are established in more generality with respect to the graph density, i.e., they consider random graph sequences in different regimes, from dense to almost sparse. There is no common agreement on the terminology used for dense/sparse graphs, we refer to \cite{coppini_weakly_2020} for a presentation of the subject.
  
In the deterministic setting, Medvedev and coauthors, see \cite{cf:CM19, cf:medvedev} and references therein, are the first to consider random graph sequences arising from a graphon. They focus on a restricted class of deterministic models, as the (deterministic) Kuramoto model. In the stochastic setting, \cite{cf:L18} considers particles systems defined on $\R^d$ with additive noise and a transport term in the dynamics, the initial conditions are assumed to be independent but not identically distributed. The hypothesis on the convergence of the graph sequence are somewhat involved as they are not expressed in terms of the cut-norm, the labeled graph limit must satisfy some extra regularity assumption, as continuity or integrability.
The work \cite{cf:RO} focuses on a system of interacting particles on $\R$ where the dynamics is defined by means of a Hamiltonian, the initial conditions are taken to be IID. It establishes a Large Deviation Principle for the empirical measure by explicitly using the cut-norm for the convergence of graph sequences. The proofs are based on Large Deviation techniques. The labeled graph limit must be Lipschitz.

The work \cite{cf:BCW20} considers interacting particle systems on $\R^d$ with multiplicative noise. The graph sequences arise from deterministic graphons under regularity assumptions that are comparable to ours (namely, only measurability for establishing the Law of Large Numbers). They establish a Law of Large Numbers and a Propagation of Chaos property. Their proof is based on careful trajectorial estimates as well as a polynomial representation of the interaction, suitable for the use of the cut-norm. They do not consider random graph limits.

}



\subsection{Applications to exchangeable graphs}
\label{ss:exchangeable}
Recall that  an exchangeable random graph $\xi = \{\xi_{ij}\}_{i,j \in \N}$ ({\change see \cite{cf:DJ08}})  is an infinite array of jointly exchangeable binary random variables, i.e., it satisfies
\begin{equation}
\label{d:excGra}
    \bbP \left( \xi_{ij} = e_{ij}, \, 1 \leq i, j \leq n \right) = \bbP \left( \xi_{ij} = e_{\sigma(i)\sigma(j)}, \, 1 \leq i, j \leq n \right)
\end{equation}
for all $n \in \N$, all permutations $\sigma$ {\change on $\{1, \dots, n\}$} and all $e_{ij} \in \{0,1\}$.

\begin{rem}
    Any finite deterministic graph $\xi$ leads to an exchangeable random graph by performing a uniform random sampling on its associated graphon $W_\xi$, see \eqref{d:graphGraphon} and \cite[\S 10]{cf:Lov}.
        
    More generally, for $W \in \tilde{\cW_0}$ one may construct an exchangeable random graph $\xi^W$, usually called \emph{$W$-random graph}, defined for $i$ and $j$ in $\N$ by
    \begin{equation}
    \label{d:WrandomGraph}
        \xi^W_{ij} = W(U_i, U_j),
    \end{equation}
    where $\{U_i\}_{i \in \N}$ is a sequence of IID uniform random variables on $I$. As recalled by the next theorem, the converse is also true: every exchangeable random graph can be obtained in this way, provided that $W$ is random.
\end{rem}

The characterization of exchangeable random graphs is a consequence of the works of Hoover, Aldous and Kallenberg; see \cite{cf:DJ08} and references therein. We recall their main result here.

\begin{thm}[{\cite[Theorem 5.3]{cf:DJ08}} and {\cite[Theorem 11.52]{cf:Lov}}]
    \label{thm:DJ}
    Let $\xi = \{\xi_{ij}\}_{i,j \in \N}$ be an exchangeable random graph. Then, $\xi$ is a $W$-random graph for some random $W \in \tilde{\cW}_0$. Moreover, let $\xi^{(n)} := \{\xi_{ij}\}_{i,j= 1, \dots, n}$ for every $n \in \N$. It holds that
  \begin{equation}
    \label{eq:graphConv}
        \xi^{(n)} \longrightarrow W \quad \bbP\text{-a.s. in } \tilde{\cW}_0,
    \end{equation}
    as $n\to\infty$.
\end{thm}
We are now ready to state the main corollary of Theorem \ref{thm:conv}, which deals with exchangeable random graphs.
\begin{cor}
    \label{cor:excGra}
    Let $\xi = \{\xi_{ij}\}_{i,j \in \N}$ be an exchangeable random graph and let $W$ be the limit of $\xi^{(n)}:=\{\xi_{ij}\}_{i,j= 1, \dots, n}$ in the sense of Theorem \ref{thm:DJ}. Assume that the initial conditions $\{\theta^i_0\}_{i \in \N}$ are IID and independent of $\{\xi^{(n)}\}_{n \in \N}$, then
    \begin{equation}
        \mu^n \longrightarrow \bar{\mu}, \quad \text{in $\bP \times \bbP$-probability, as $n \to \infty$},
    \end{equation}
    where $\bar{\mu}$ is the solution to \eqref{eq:pde_mean} starting from $\bar{\mu}_0$ with graphon $W$.
\end{cor}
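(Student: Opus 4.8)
The plan is to obtain the corollary as an immediate consequence of Theorem \ref{thm:conv}, applied to the sequence of finite graphs $\xi^{(n)} := \{\xi_{ij}\}_{i,j=1,\dots,n}$. Two hypotheses must be checked: the graph-convergence condition \eqref{d:graphConv} and the independence of the initial data $\{\theta^i_0\}_{i\in\N}$ from $\{\xi^{(n)}\}_{n\in\N}$. The latter is assumed in the statement, so the work reduces to verifying \eqref{d:graphConv}.

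For this step I would invoke the Aldous--Hoover--Kallenberg representation as recorded in Theorem \ref{thm:DJ}: since $\xi$ is an exchangeable random graph, there is a random $W \in \tilde{\cW}_0$ with $\xi^{(n)} \longrightarrow W$ $\bbP$-almost surely in $(\tilde{\cW}_0, \delta_\square)$. Because $\delta_\square$ is bounded by $1$, the random variables $\delta_\square(\xi^{(n)}, W)$ are uniformly bounded and tend to $0$ $\bbP$-a.s., so dominated convergence gives $\bbE\left[\delta_\square(\xi^{(n)}, W)\right] \to 0$, which is precisely \eqref{d:graphConv}. Alternatively one may simply quote Lemma \ref{lem:graphConv}: almost sure convergence in $\tilde{\cW}_0$ implies convergence in $\bbP$-probability, which is equivalent to \eqref{d:graphConv}.

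With both hypotheses in place, Theorem \ref{thm:conv} yields $\mu^n \to \bar{\mu}$ in $\bP \times \bbP$-probability in $\cP(\cC([0,T],\bbT))$, with $\bar{\mu}$ depending only on the randomness of $W$ and solving \eqref{eq:pde_mean} from $\bar{\mu}_0$ with graphon $W$ — exactly the claimed statement. There is no genuine obstacle here; the only care needed is in translating the almost-sure convergence supplied by Theorem \ref{thm:DJ} into the expectation form \eqref{d:graphConv} required by Theorem \ref{thm:conv}, and in noting that the randomness of the limit $\bar{\mu}$ is inherited entirely from $W$, as already guaranteed by the conclusion of Theorem \ref{thm:conv}. (It is worth emphasizing that no regularity of $W$ is invoked and that $W$ is genuinely random in general, both of which are allowed by Theorem \ref{thm:conv}.)
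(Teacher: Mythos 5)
Your proof is correct and follows exactly the route the paper intends: Theorem~\ref{thm:DJ} supplies $\bbP$-a.s.\ convergence of $\xi^{(n)}$ to $W$ in $(\tilde{\cW}_0,\delta_\square)$, bounded (or dominated) convergence upgrades this to $\bbE[\delta_\square(\xi^{(n)},W)]\to 0$ as in Lemma~\ref{lem:graphConv}, and Theorem~\ref{thm:conv} then applies directly under the stated independence of the initial conditions. No gaps; the paper simply leaves this derivation implicit.
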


\medskip

We say that $\xi = \{\xi^{(n)}\}_{n\in\N}$ is a sequence of exchangeable graphs when the random variables $\{\xi^{(n)}_{ij}\}_{i,j =1, \dots, n}$ are exchangeable for each $n \in \N$. Observe that $\xi$ is not necessarily an exchangeable random graph as in \eqref{d:excGra}. Whenever $\xi = \{\xi^{(n)}\}_{n\in\N}$ is a sequence of exchangeable graphs, the particles $\{\theta^{i,n}\}_{i =1, \dots, n}$ are exchangeable as well ({\change recall that the initial conditions are assumed IID}) and, in particular, their joint distribution is symmetric, i.e., invariant under permutation of the labels. {\change Observe that this is not true when the graph is not exchangeable.} A classical result by Sznitman \cite[Proposition 2.2]{cf:szni} is that the Law of Large Numbers for the empirical measure of a symmetric joint distribution of particles is equivalent to the propagation of chaos property. From equation \eqref{eq:convergence}, we can thus deduce a propagation of chaos statement for the particle system \eqref{eq:g}. This is illustrated in the next proposition.
\begin{prop}
    \label{p:poc}
    Assume the hypothesis of Theorem \ref{thm:conv}. If $\xi = \{\xi^{(n)}\}_{n\in\N}$ is a sequence of exchangeable graphs, then for every $k \in \N$,
    \begin{equation}
    \lim_{n \to \infty} \cL (\theta^{1,n}, \dots, \theta^{k,n}) = \prod_{i=1}^k \cL(\theta) =  \prod_{i=1}^k  \bar{\mu},
    \end{equation}
    where $\cL$ stands for the law in $\bP \times \bbP$-probability and $\bar{\mu}$ is the solution to \eqref{eq:pde_mean}.
\end{prop}
We omit the proof of Proposition \ref{p:poc}. 


\subsection{Mean-field behavior and two explanatory examples}
\label{ss:examples}
Theorem \ref{thm:conv} allows for a better understanding of the relationship between random graph sequences and the behavior of the empirical measure. More precisely:
\begin{enumerate}
  \item[(1)] It highlights the difference between the randomness present in the graph $\xi^{(n)}$ for every $n \in \N$ and the randomness left in the limit $W$;
  \item[(2)] It presents a new class of random Fokker-Planck equations as possible limit descriptions for the empirical measure $\mu^n$.
\end{enumerate}
As a byproduct, Theorem \ref{thm:conv} yields a precise characterization of the graph sequences for which the empirical measure converges to the mean-field limit. Let us recall what we mean by mean-field limit and first discuss this last issue; we then address (1) and (2) with the help of two examples.

Consider system \eqref{eq:g} on a sequence of complete graphs, i.e., $\xi^{(n)}_{ij} \equiv 1$ for every $i, \ j$ and $n$. It is well known \cite{cf:Oel84,cf:szni} that the empirical measure $\mu^n$ converges to the \emph{mean-field limit} $\rho \in \cC([0,T],\cP(\bbT))$, defined as the unique solution to the following McKean-Vlasov equation:
\begin{equation}
\label{eq:McKeanVlasov}
    \partial_t \rho_t (\theta) = \frac 12 \partial^2_\theta \rho_t (\theta) - \partial_\theta \left[\rho_t(\theta) F(\theta) \right] - p \, \partial_\theta \left[\rho_t (\theta) \int_\bbT \Gamma(\theta, \tilde{\theta}) \, \rho_t(\dd \tilde{\theta}) \right],
\end{equation}
with initial condition $\bar{\mu}_0$ and $p=1$. Existence and uniqueness for the solution to \eqref{eq:McKeanVlasov} hold under our assumptions on $F, \, \Gamma$ and $\bar{\mu}_0$, see again \cite{cf:Oel84, cf:szni}.

Suppose that the graph sequence is converging to a deterministic limit; we discuss the case of a random limit after the first example. Theorem \ref{thm:conv} implies that for every sequence $\{\xi^{(n)}\}_{n\in\N}$ which converges to some flat graphon $W\equiv p \in [0,1]$, the empirical measure $\mu^n$ {\change in the limit} satisfies equation \eqref{eq:McKeanVlasov} with corresponding $p$. Since the convergence of $\xi^{(n)}$ to a non-constant graphon gives rise to equation \eqref{eq:pde_mean}, which is \textendash{} at least formally \textendash{} different from \eqref{eq:McKeanVlasov}, {\change we conclude that, if the sequence $\xi^{(n)}$ converges to a constant graphon, then the limit of $\mu^n$ is formally mean-field}. The graphs with such asymptotic behavior are known in the literature as pseudo-random graphs, see \cite{cf:BBCN16, cf:CGW89} and \cite[\S 11.8.1]{cf:Lov}.

\medskip

We now address the issues (1) and (2) with two explanatory examples. The mean-field comparison when the graph limit is random is discussed after the next example.

\subsubsection{Example I: $W$-random graphs}
Fix $p \in (0,1)$ and let $g$ be a random variable on $(0,1)$ with mean $\sqrt{p}$ and distribution function given by $F_g$. Let $\{g_i\}_{i \in \N}$ be a sequence of IID copies of $g$. Conditionally on $\{g_i\}_{i\in \N}$, $\xi^{(n)}_{ij}$ is defined as
\begin{equation}
\label{ex:ber}
\xi^{(n)}_{ij} \sim \text{Ber}(g_i g_j), \quad \text{independently for each } 1 \leq i < j \leq n.
\end{equation}
The graph $\xi^{(n)}$ is the dense analogue of the inhomogeneous random graph, also known as rank-1 model, see e.g., \cite{cf:BvdHvL,cf:BCvdH}. In this model, $g_i$ corresponds to the weight associated with particle $i$ and, loosely speaking, the closer $g_i$ is to 1, the more connections particle $i$ forms. We expect that assigning different distributions to $g$ leads to different behaviors for the empirical measure \eqref{d:emp}.

The construction made in \eqref{ex:ber} yields a binary array $\{\xi^{(n)}_{ij}\}_{i,j = 1, \dots, n}$ of exchangeable random variables. In particular, {\change the edges} have the same expected value $\bbE[\xi^{(n)}_{ij}] = p$, for every $i\neq j$. We are interested in comparing the empirical measure of the system \eqref{eq:g} defined on the graph \eqref{ex:ber} to the empirical measure of the corresponding \emph{annealed} system that is obtained from \eqref{eq:g} by replacing $\xi^{(n)}_{ij}$ with their expected value. More precisely, the annealed system is defined as the solution to
\begin{equation}
\label{eq:gC}
\dd{\theta}_{t}^{i,n} =\, F(\theta^{i,n}_t) \dd t +  \frac{p}{n} \sum_{j=1}^{n}  \Gamma (\theta_{t}^{i,n}, \theta_{t}^{j,n}) \dd t + \dd B_{t}^{i},
\end{equation}
for which the asymptotic behavior is known to be the mean-field limit \eqref{eq:McKeanVlasov}.

%
{\change The behavior of system \eqref{eq:g} on the graph sequence} \eqref{ex:ber} is described in the limit by \eqref{eq:gC} only when $g$ is deterministic and equal to $\sqrt{p}$. Recall the definition of $W$-random graph given in \eqref{d:WrandomGraph}: we see that $\xi^{(n)}$ is a $W_g$-random graph with
\begin{equation}
    W_g(x,y) = F^{-1}_g (x) \, F^{-1}_g(y), \quad  \text{for } x, y \in I,
\end{equation}
where $F^{-1}_g$ is the pseudo inverse of $F_g$. In particular, the $\bbP$-a.s. limit of $\xi^{(n)}$ is given by $W_g$ and thus the limit of $\mu^n$ by the solution to equation \eqref{eq:pde_mean} with $W=W_g$. Theorem \ref{thm:conv} and Theorem \ref{thm:sta2} imply that if $W_g$ is  arbitrarily close to the constant graphon $p$ in the cut-distance, i.e., if  $\text{Var}[g] \ll 1$, then the empirical measure of the system associated to $\xi^{(n)}$ is arbitrarily close to the mean-field limit of the annealed system \eqref{eq:gC}. In this case, $\{\xi^{(n)}\}_{n \in \N}$ is close to an Erd\H{o}s-Rényi graph sequence, for which the mean-field behavior is already known, see \cite{cf:CDG}. Finally, observe that choosing a suitable deterministic sequence of the weights $\{g_i\}_{i \in [n]}$, e.g., $g_i = F^{-1}_g (i/n)$ for $i \in [n]$, would lead to a random graph $\xi^{(n)}$ which is not exchangeable. In particular, $\bbE[\xi^{(n)}_{ij} ]$ is not constant and changes for every $i$ and $j$. Nonetheless, the sequence $\xi^{(n)}$ still converges to the same limit $W_g$, $\bbP$-a.s.~in the realization of the Bernoulli random variables and possibly at the cost of requiring some regularity on $W_g$, see \cite[\S 11.4]{cf:Lov}.

This example illustrates how the randomness related to the exchangeability in the sequence $\xi^{(n)}$ is lost in the limit of $\mu^n$, as it is lost in the graph limit $W_g$. In this sense, adding exchangeability to system \eqref{eq:g} does not yield any \emph{averaging property} on the empirical measure $\mu^n$. Moreover, adding the extra randomness through Bernoulli random variables in \eqref{ex:ber} does not alter this fact. In other words, taking $\xi^{(n)}_{ij} = g_i g_j \in [0,1]$ yields yet again the same limit for $\mu^n$.

Until now, we have focused on deterministic limits for the sequence $\xi^{(n)}$.
We now consider the case where the limit $W$ is random, and we address the relationship between the resulting system and the mean-field limit $\rho$ given in \eqref{eq:McKeanVlasov}. One might be led to conjecture that it is possible to recover the mean-field behavior by, e.g., averaging the limit dynamics with respect to the randomness in $W$. In the next example, we formulate this remark in a rigorous way. We show that this is in general not possible, although it may lead to a new class of asymptotic behaviors which are interesting on their own, as pointed out in the bullet point (2) above.

\subsubsection{Example II: random mean-field behavior}
Consider the growing preferential attachment graph $\xi_{\text{pa}}$ constructed iteratively as follows; see also \cite[Example 11.44]{cf:Lov}. Begin with a single node and, assuming that at the $n$-th step there are already $n$ nodes, create a new node with label $n+1$ and connect it to each node $i\in\{1,\ldots,n\}$ with probability $(d_n(i)+1)/(n+1)$ where $d_n(i)$ is the degree of node $i$ at step $n$ and each connection is made independently of the others. Denote the corresponding random graph by $\xi^{(n+1)}_{\text{pa}}$.

Roughly speaking, the behavior of $\xi_{\text{pa}}$ depends crucially on the first steps of the construction and it stabilizes to a homogeneous structure as $n$ grows. This is illustrated in the next proposition.

\begin{prop}[{\cite[Proposition 11.45]{cf:Lov}}]
  \label{p:GPA}
  With probability 1, the sequence $\{\xi^{(n)}_{\emph{pa}}\}_{n \in \N}$ converges to a random constant graphon.
\end{prop}

Consider a particle system defined on the graph sequence $\{\xi^{(n)}_{\text{pa}}\}_{n \in \N}$. The empirical measure converges to the solution of equation \eqref{eq:McKeanVlasov} with a random $p$. In other words, $\mu^n$ converges to a random mean-field limit. Integrating \eqref{eq:McKeanVlasov} with respect to this randomness and denoting $\bbE[\rho_t]$ by $\bar{\rho}_t$ for every $t \in [0,T]$, we obtain that $\bar{\rho} \in \cC([0,T],\cP(\bbT))$ satisfies
\begin{equation}
\label{eq:randomMcKeanVlasov}
\partial_t \bar{\rho}_t (\theta) = \frac 12 \partial^2_\theta \bar{\rho}_t (\theta) - \partial_\theta \left[\bar{\rho}_t(\theta) F(\theta) \right] - \partial_\theta \left[ \bbE \Big[ p \, \rho_t (\theta) \int_\bbT \Gamma(\theta, \tilde{\theta}) \rho_t(\dd \tilde{\theta}) \Big]\right],
\end{equation}
for $t \in [0,T]$. Note that \eqref{eq:randomMcKeanVlasov} is not written in closed form because of the third term on the right-hand side which is not linear in $\rho$ and $p$. In this sense, $\bar{\rho}$ does not formally satisfy the mean-field limit, i.e., it is not a solution to \eqref{eq:McKeanVlasov} with some deterministic $p \in [0,1]$. {\change By definition, $\bar{\rho}$ is a mixture of mean-field limits, weighted by the distribution of $p$.}


To have an intuitive understanding of what $\bar{\rho}$ may look like, consider the stochastic Kuramoto model without natural frequencies \cite{cf:BGP10,cf:C19} defined on the sequence $\xi^{(n)}_{\text{pa}}$. The model is defined as the solution to
\begin{equation}%
\label{eq:kur}
\dd \theta^{i,n}_t = \frac Kn \sum_{j=1}^n \xi^{(n)}_{ij} \sin (\theta^{j,n}_t-\theta^{i,n}_t) \dd t + \dd B^i_t,
\end{equation}%
for $i =1, \dots, n$ and $t \in [0,T]$. It corresponds to \eqref{eq:g} with the choices $F\equiv 0$ and $\Gamma(\theta,\psi) = -K\sin(\theta-\psi)$. An application of Theorem \ref{thm:conv} and Proposition \ref{p:GPA} implies that the empirical measure of \eqref{eq:kur} converges to the solution of
\begin{equation}
\label{eq:kurC}
\partial_t \rho_t (\theta) = \frac 12 \partial^2_\theta \rho_t(\theta) + pK \partial_\theta [ \rho_t(\theta) (\sin * \rho_t) (\theta)],
\end{equation}
where $*$ stands for the convolution operator. It is well-known that the system \eqref{eq:kurC} undergoes a phase transition as the coupling strength $pK$ crosses the critical threshold $pK=1$. Hence, the phase transition for this specific model occurs at a \textit{random} critical threshold. Depending on the sampled value of $p$, one obtains stable synchronous solutions in the supercritical regime ($pK >1$), or uniformly distributed oscillators on $\bbT$ ($0\leq pK<1$). The solution to equation \eqref{eq:kurC} can be written down explicitly (see again \cite{cf:BGP10,cf:C19}) and, integrating over the randomness of $p$, gives a superposition of synchronous and asynchronous states which, in general, is \textit{not} a mean-field solution, i.e., it does not solve \eqref{eq:kurC} for some fixed $p \in [0,1]$.



\section{The non-linear process}
\label{s:nonLin}
We introduce a non-linear process \eqref{eq:nonLin1} which has already been considered in the literature \cite{cf:BCW20, cf:CM19, cf:L18, cf:LS14,cf:RO} as the natural candidate in case the particles in \eqref{eq:g} are not exchangeable and their labels are fixed from the initial condition. This process is useful for studying the evolution of a tagged particle with a specific profile of connections, as stressed in \cite{cf:L18}.

Contrary to our setting, some regularity in the \textendash{} now labeled \textendash{} graphon is usually assumed to show the convergence of the empirical measure \eqref{d:emp}. We will exploit {\change the non-linear process with fixed labels} \eqref{eq:nonLin1} to better understand \eqref{eq:nonLin2} and to establish existence and uniqueness.
    

Before introducing \eqref{eq:nonLin1}, we define some other tools for dealing with empirical measures and graphons. Notably, we introduce an equivalence relation between probability measures on $I \times \bbT$ inspired by graph limits theory, see \eqref{d:equiv}. This will allow us to prove Theorem \ref{thm:sta2}, where we establish that the empirical measure is H\"older continuous with respect to the underlying graphon.

\subsection{Distances between probability measures}
Let $\cM_T$ be the space of probability measures on $I \times \cC([0,T], \bbT)$ with first marginal equal to the Lebesgue measure $\lambda$ on $I$, i.e.,
\begin{equation}
\cM_T := \left\{ \mu \in \cP(I \times \cC([0,T], \bbT)) : p_1 \circ \mu = \lambda \right\},
\end{equation}
where $p_1$ is the projection map associated to the first coordinate. For $\mu \in \cM_T$ the following decomposition holds
\begin{equation}
\mu(\dd x, \dd \theta) = \mu^x (\dd\theta) \lambda(\dd x), \quad x\in I,
\end{equation}
where $\mu^x \in \cP(\cC([0,T], \bbT))$ for almost every $x \in I$. From now on, we denote the Lebesgue measure $\lambda (\dd x)$ on $I$ simply by $\dd x$. For $\mu, \nu \in \cM_T$, define their distance by
\begin{equation}\label{d:extended_distance}
d_T (\mu,\nu) := \left(\int_I D^2_T (\mu^x,\nu^x) \dd x \right)^{1/2}.
\end{equation}

\begin{rem}
\label{rem:measurability}
{\change 
  Observe that the previous expression makes sense as $I\ni x \mapsto \mu^x \in \cP(\cC([0,T], \bbT))$ is a $\lambda$-measurable function. This is because we consider $\mu \in \cP(I \times \cC([0,T], \bbT))$. If one considers $\cP(\cC([0,T], \bbT))^I$, as done in \cite{cf:BCW20}, then an extra assumption is required, namely that every $\eta = \{\eta^x\}_{x \in I} \in \cP(\cC([0,T], \bbT))^I$ is $\lambda$-measurable in $x \in I$.
}
\end{rem}
%

\medskip

\begin{rem}
Observe that the previous definitions make sense also with $T=0$ and $\cC([0,T], \bbT)$ replaced by $\bbT$. In particular, $\cM_0$ is the space of probability measures on $I\times \bbT$ with first marginal equal to the Lebesgue measure $\lambda$ on $I$, i.e.
\begin{equation}
\label{d:M_0}
\cM_0 = \left\{ \mu_0 \in \cP(I \times \bbT) : p_1 \circ \mu_0 = \lambda \right\},
\end{equation}
and
\begin{equation}
d_0 (\mu_0,\nu_0) = \left(\int_I D^2_0 (\mu^x_0,\nu^x_0) \, \dd x\right)^{1/2}, \quad \text{ for }\mu_0, \nu_0 \in \cM_0.
\end{equation}
\end{rem}


Inspired by the graphon framework, one can define the following relation of equivalence on $\cM_T$ (the case $T=0$ is analogous): for $\mu, \nu \in \cM_T$
\begin{equation}
\label{d:equiv}
\mu \sim \nu \quad \text{ iff there exists $\varphi \in S_I$ such that }  \mu^x = \nu^{\varphi(x)}, \; x\text{-a.s.}.
\end{equation}
Endow the quotient space $\cM_T / \sim$ with the induced distance given by
\begin{equation}
\label{d:normQ}
\tilde{d}_T (\mu, \nu) := \inf_{\varphi \in S_I} d_T (\mu, \nu^\varphi),
\end{equation}
where we have used the notation $\nu^\varphi = \{\nu^{\varphi(x)}\}_{x \in I}$. Observe that if $\mu \sim \nu$, then $\bar{\mu} = \int_I \mu^x \dd x = \int_I \nu^{\varphi(x)} \dd x = \int_I \nu^x \dd x = \bar{\nu}$. In particular, for every $\varphi \in S_I$
\begin{equation}
D^2_T(\bar{\mu}, \bar{\nu}) = D^2_T(\bar{\mu}, \bar{\nu}^\varphi )\leq \int_I D^2_T (\mu^x, \nu^{\varphi(x)}) \dd x = d^2_T (\mu, \nu^\varphi).
\end{equation}
By taking the infimum with respect to $\varphi \in S_I$, we obtain
\begin{equation}
\label{eq:dis_inequality}
D_T(\bar{\mu}, \bar{\nu}) \leq \tilde{d}_T (\mu, \nu).
\end{equation}



\subsection{The non-linear process with fixed labels}
Fix a labeled graphon $W \in \cW_0$ together with an initial condition $\mu_0 \in \cM_0$. Consider the process $\theta = \{\theta^x\}_{x \in I}$ that solves the system
\begin{equation}
\label{eq:nonLin1}
\begin{cases}
\theta^x_t = \theta^x_0 + \int_0^t F(\theta^x_s) \dd s + \int_0^t \int_I W(x,y) \int_\bbT  \Gamma(\theta^x_s, \theta) \mu^y_s(\dd \theta) \dd y \, \dd s + B^x_t, \\
\mu^x_t = \cL(\theta^x_t), \quad \text{ for } x\in I, \, t \in [0,T],
\end{cases}
\end{equation}
where $\{\theta^x_0\}_{x \in I}$ is a random vector such that $\cL(\theta^x_0)= \mu^x_0$ for $x \in I$ and $\{B^x\}_{x\in I}$ a sequence of IID Brownian motions independent of  $\{\theta^x_0\}_{x \in I}$. The following proposition shows existence and uniqueness for the solution of \eqref{eq:nonLin1}. The proof follows a classical argument by Sznitman \cite{cf:szni}  and is postponed to Section \ref{pss:nonLin1}.

\begin{prop}
    \label{p:nonLin1}
    There exists a unique {\change pathwise} solution $\theta = \{\theta^x\}_{x \in I}$ to \eqref{eq:nonLin1}. {\change If $\nu^x \in \cC([0,T],\cP(\bbT))$ denotes the law of $\theta^x$ for $x \in I$, then $I \ni x \mapsto \nu^x \in \cP( \cC([0,T], \bbT))$ is Lebesgue measurable. For every $x \in I$,} $\nu^x$ satisfies the following non-linear Fokker-Planck equation in the weak sense
    \begin{equation}
    \label{eq:pde}
    \partial_t \mu^x_t (\theta) = \frac 12 \partial^2_\theta \mu^x_t(\theta) - \partial_\theta \left[\mu^x_t(\theta) F(\theta) \right] - \partial_\theta \left[\mu^x_t(\theta) \int_I W(x,y) \int_\bbT \Gamma(\theta, \theta')\mu^y_t(\dd \theta') \dd y  \right]
    \end{equation}
    with initial condition $\mu^x_0 \in \cP(\bbT)$.
\end{prop}

The process $\{\theta^x\}_{x \in I}$ is indexed by the space of labels $I$. For two different labels $x$ and $y$ in $I$, the behavior of particles $\theta^x$ and $\theta^y$ may vary depending on their connection profile encoded in $W$ and the two marginals $\mu^x$ and $\mu^y$ may vary as well. Similar results in different settings have already been shown in \cite{cf:BCW20, cf:CH18, cf:L18, cf:LS14, cf:RO}.

It is interesting to know that the law $\mu=\{\mu^x\}_{x \in I} \in \cM_T$ is continuous with respect to the cut-norm (or equivalently in $d_\square$-distance) in $\cW_0$, as already remarked in \cite[Theorem 2.1]{cf:BCW20} for much more general systems than the ones we consider here. Exploiting the compactness of $\bbT$ and some extra regularity of $\Gamma$, we are able to prove that the map $W\mapsto \mu^W$ is Hölder-continuous, as shown in the next proposition.

\begin{prop}
    \label{p:sta1}
    Assume the hypothesis of Theorem \ref{thm:conv}. There exists a positive constant $C$, {\change only depending on $\Gamma$ and $T$}, such that, if $\mu^W$ and $\mu^V$ denote the laws of the solutions to \eqref{eq:nonLin1} with $W \in \cW_0$ and $V \in \cW_0$ respectively, then
    \begin{equation}
    d_T(\mu^W, \mu^V) \leq C \norm{W-V}^{1/2}_{\square}.
    \end{equation}
\end{prop}

{\change The proof is based on classical trajectorial estimates and Fourier analysis. As $\Gamma$ is a function on the torus, it is possible to apply Fourier-type arguments to factorize it in its two components, this is the key point to make the graphon norm $\norm{\cdot}$ appear and prove Proposition \ref{p:sta1}}. The full proof is postponed to Subsection \ref{pss:nonLin1}.

The previous proposition can be traduced in terms of the cut-distance $\delta_\square$, recall \eqref{d:cut-distance}, and the space of graphons $\tilde{\cW}_0$.

\begin{thm}
    \label{thm:sta2}
    Assume the hypothesis of Theorem \ref{thm:conv}. Then, there exists a positive constant $C$, {\change only depending on $\Gamma$ and $T$}, such that, if $\bar{\mu}^W$ and $\bar{\mu}^V$ denote the laws of the solutions to equation \eqref{eq:nonLin2} associated with graphons $W$ and $V$ respectively, it holds that
    \begin{equation}
        \label{eq:holderContinuity}
        D_T(\bar{\mu}^W, \bar{\mu}^V) \leq C \, {\delta_{\square} (W,V)}^{1/2}.
    \end{equation}
\end{thm}%
{\change The proof of Theorem \ref{thm:sta2} is a straightforward consequence of Proposition \ref{p:sta1}, it is given in Subsection \ref{pss:nonLin2}.}
  
The 2-Wasserstein distance $D_T$, recall the definition \eqref{d:wass_T}, could be replaced with the $p$-Wasserstein distance in \eqref{d:wass_T} for $p \geq 1$. This would lead to a H\"older exponent in \eqref{eq:holderContinuity} as large as $1/p$. We stick to $p=2$ but any choice could be possible, modulo a different constant $C$.

Observe that Theorem \ref{thm:nonLin2} and Theorem \ref{thm:sta2} imply that the following mapping is continuous:
\begin{equation}
    \begin{split}
        \Psi : (\tilde{\cW}_0, \delta_\square) &\rightarrow ( \cC([0,T], \cP(\bbT)), D_T) \\
        W & \mapsto \bar{\mu}^W,
    \end{split}
\end{equation}
where $\bar{\mu}^W$ is the law of $\theta$ solving equation \eqref{eq:nonLin2} with graphon $W$. In particular, to every random variable $W$ in $\tilde{\cW}_0$ corresponds a random variable $\bar{\mu}^W$ with values in $\cC([0,T], \cP(\bbT))$, i.e., for almost every $\omega \in \Omega$, $\bar{\mu}^W (\omega) = \bar{\mu}^{W(\omega)}$.

\begin{rem}
    We point out that Theorem \ref{thm:sta2} allows to conclude that two solutions to equation \eqref{eq:pde_mean} are close as probability measures if the corresponding graphons are close in $\tilde{\cW}_0$. However, whether two different graphons can lead to similar behaviors in the particle system is still not clear. In other words, we are not able to provide any lower bound complementary to the upper bound given in equation \eqref{eq:holderContinuity}. To our knowledge, this aspect may be model-dependent and needs further investigations.
\end{rem}

\subsubsection{Relation between label and unlabeled non-linear processes}
Consider a probability distribution $\mu_0 \in \cM_0$ such that $\int_I \mu^x_0 \, \dd x = \bar{\mu}_0$. The solution to \eqref{eq:pde_mean} is given by $\bar{\mu} = \int_I \mu^x \, \dd x$, where $\mu^x$ is the law of $\theta^x$ solving \eqref{eq:nonLin1} with  initial condition $\mu^x_0$ and labeled graphon $W$. In other words, $\theta$ has the same law as $\theta^U$ solution to \eqref{eq:nonLin1}, where $U$ is a uniform random variable in $I$ independent of the other randomness in the system. As the following remark shows, the law $\bar{\mu}$ of $\theta$ {\change does neither depend on the representative $W$, nor} on $\mu_0$.

\begin{rem}
    \label{rem:varphi}
    Let $\varphi \in S_I$, i.e., $\varphi$ is an invertible measure preserving map from $I$ to itself, and $\nu = \{ \nu^x\}_{x \in I}$ the law of $\{\theta^{\varphi(x)}\}_{x \in I}$ solving \eqref{eq:nonLin1}. By a change of variables, $\theta^{\varphi(x)}$ solves
    \begin{equation}
    \theta^{\varphi(x)}_t = \theta^{\varphi(x)}_0 + \int_0^t F(\theta^{\varphi(x)}_s) \dd s +  \int_0^t \int_I W({\varphi(x)},\varphi(y)) \int_\bbT \Gamma(\theta^{\varphi(x)}_s, \theta) \mu^{\varphi(y)}_s(\dd \theta) \dd y \, \dd s + B^{\varphi(x)}_t
    \end{equation}
    and can be rewritten with $V= W^\varphi$  and $\psi^x = \theta^{\varphi(x)}$ as
    \begin{equation}
    \label{eq:nonLin1phi}
    \psi^{x}_t = \theta^{\varphi(x)}_0 +  \int_0^t F(\psi^x_s) \dd s  + \int_0^t \int_I V(x,y) \int_\bbT \Gamma(\psi^x_s, \theta) \nu^y_s(\dd \theta) \dd y \, \dd s + B^{\varphi(x)}_t,
    \end{equation}
    which has the same law as \eqref{eq:nonLin1} with labeled graphon $V$ and initial conditions $\{\theta^{\varphi(x)}\}_{x\in I}$.
    
    Observe that the laws $\nu$ and $\mu$ associated to \eqref{eq:nonLin1phi} and \eqref{eq:nonLin1} respectively, differ only in the labeling of the vertices but their distance in $\cM_T$ is not zero due to the initial conditions and the fact that $\norm{W-V}_\square = \norm{W-W^\varphi}_\square$ is, in general, different from zero. However, if one looks at $\bar{\mu} = \int_I \mu^x \, \dd x$ and $\bar{\nu} = \int_I \nu^x \, \dd x$, they coincide as probability measures in the sense that $D_T(\bar{\mu},\bar{\nu})=0$. In particular, the law of the solution to equation \eqref{eq:nonLin2} is also equivalent to $\psi^U$, where $\psi^x$ solves \eqref{eq:nonLin1phi}, and $U$ is uniformly distributed on $I$.
\end{rem}


\subsection{Proofs for the non-linear process \eqref{eq:nonLin1} with fixed labels}
\label{pss:nonLin1}

\begin{proof}[Proof of Proposition \ref{p:nonLin1}]
    The proof follows a classical argument given in \cite[Lemma 1.3]{cf:szni}. Consider $\nu \in \cM_T$ and $\{\theta^{x,\nu}\}_{x \in I}$ solving
    \begin{equation}
    \label{nl:linearEq}
        \theta^{x, \nu}_t = \theta^x_0 + \int_0^t F(\theta^{x,\nu}_s) \, \dd s + \int_0^t \int_I W(x,y) \int_\bbT \Gamma (\theta^{x,\nu}_s, \theta) \nu^y_s (\dd \theta) \, \dd y \, \dd s + B^x_t,
    \end{equation}
    where the initial conditions and the Brownian motions are the same of \eqref{eq:nonLin1}. {\change Observe that, by Remark \ref{rem:measurability}, the mapping $I \ni x \mapsto \nu^x_s \in \cP(\bbT)$ is Lebesgue measurable for every $s \in [0,T]$}. Since $F$ and $\Gamma$ are bounded Lipschitz functions, there exists a unique solution to \eqref{nl:linearEq}, which we denote by $\Phi(\nu) \in \cM_T$. The solution of \eqref{nl:linearEq} is constructed as the limit of a Cauchy sequence of elements of $\cM_T$. Since these are measurable as functions of $x\in I$, the mapping $I \ni x \mapsto \text{Law}(\theta^{x,\nu})\in\cP(\cC([0,T], \bbT))$ is also Lebesgue measurable. Thus, the map
    \begin{equation}
    \begin{split}
        \Phi : (\cM_T, d_T) &\to (\cM_T, d_T) \\
                \nu & \to \Phi(\nu)
    \end{split}
    \end{equation}
    is well defined. A solution to \eqref{eq:nonLin1} is a fixed point of $\Phi$ and any fixed point of $\Phi$ is a solution to \eqref{eq:nonLin1}.
    
    For $\mu, \nu \in \cM_T$, consider the processes $\theta^{x,\mu}$ and $\theta^{x,\nu}$, with $x \in I$. We estimate their distance as
    \begin{align}
         &\left| \theta^{x,\mu}_t - \theta^{x,\nu}_t \right|^2 \leq C \int_0^t \left|F(\theta^{x,\mu}_s) - F(\theta^{x,\nu}_s)\right|^2 \dd s \notag\\
        & + C \int_0^t \left| \int_I W(x,y) \left(\int_\bbT \Gamma (\theta^{x,\mu}_s, \theta) \mu^y_s (\dd \theta) - \int_\bbT \Gamma (\theta^{x,\nu}_s, \theta) \nu^y_s (\dd \theta)\right) \dd y  \right|^2 \dd s \notag
\end{align}

Adding and subtracting in the second integral the quantity $\Gamma (\theta^{x,\mu}_s, \theta) \nu^y_s(\dd \theta)$
and using that $F$ and $\Gamma$ are Lipschitz-continuous functions and that $F, \, \Gamma$ and $W$ are bounded, we get
    \begin{equation}
 \leq C\int_0^t \left| \theta^{x,\mu}_s - \theta^{x,\nu}_s \right|^2 \dd s + C\int_0^t \int_I \left|\int_\bbT \Gamma (\theta^{x,\mu}_s, \theta) \left[ \mu^y_s -\nu^y_s \right] (\dd \theta) \right|^2 \dd y \, \dd s,
    \end{equation}

     From \eqref{d:lips_dist} we obtain
    \begin{equation}
    \left|\int_\bbT \Gamma (\theta^{x,\mu}_s, \theta) \left( \mu^y_s -\nu^y_s \right) (\dd \theta) \right| \leq D_s (\mu^y, \nu^y)
    \end{equation}
    from which, using \eqref{d:extended_distance}, we deduce
    \begin{equation}
        \left| \theta^{x,\mu}_t - \theta^{x,\nu}_t \right|^2 \leq C\int_0^t \left| \theta^{x,\mu}_s - \theta^{x,\nu}_s \right|^2 \dd s + C\int_0^t d^2_s (\mu, \nu) \, \dd s.
    \end{equation}
    The definition of $D_T$ \eqref{d:wass_T2} and an application of Gronwall's lemma lead to
    \begin{equation}
    \label{nl:map}
    d^2_T(\Phi (\mu), \Phi (\nu)) \leq \int_I \bE \left[ \sup_{t \in [0,T]} \left| \theta^{x,\mu}_t - \theta^{x,\nu}_t \right|^2 \right] \dd x \leq C \int_0^T d^2_s (\mu, \nu) \, \dd s.
    \end{equation}
    From the last relation we obtain the uniqueness of solutions to \eqref{eq:nonLin1}.
    
    We prove that a solution exists by iterating \eqref{nl:map}. Indeed, for $k \geq 1$ and $\mu \in \cM_T$, one gets
    \begin{equation}
        d^2_T(\Phi^{k+1} (\mu), \Phi^k (\mu)) \leq C^k \frac {T^k}{k!} \int_0^T d^2_t(\Phi(\mu), \mu) \, \dd t.
    \end{equation}
    In particular, $\{\Phi^k (\mu)\}_{k \in \N}$ is a Cauchy sequence for $k$ large enough, and its limit is the fixed point of $\Phi$. Note that $d_t(\Phi(\mu), \mu) < \infty$ since we are working on the compact space $\bbT$.
    
    \medskip
    
    For the second part of Proposition \ref{p:nonLin1}, apply It\^o's formula to $f(\theta^x_t)$ with $f \in \cC^\infty_0$ to get
    \begin{equation}
    \begin{split}
        f(\theta^x_t) & \, =\,  f(\theta^x_0) + \frac 12  \int_0^t \partial^2_\theta f(\theta^x_s) \, \dd s + \int_0^t \partial_\theta f(\theta^x_s) \, F(\theta^x_s)  \, \dd s \\
        + & \int_0^t \partial_\theta f(\theta^x_s) \, \left[ \int_I W(x,y) \int_\bbT \Gamma (\theta^x_s, \theta) \mu^y_s (\dd \theta) \dd y \right] \dd s + \int_0^t \partial_\theta f(\theta^x_s) \, \dd B^x_s.
    \end{split}
    \end{equation}
    Integrating with respect to $\bP$ yields the weak formulation of \eqref{eq:pde}.
\end{proof}


Next we move to the proof of Proposition \ref{p:sta1}.

\begin{proof}[Proof of Proposition \ref{p:sta1}]
    Let $\{\theta^{x,W}\}_{x \in I}$ and $\{\theta^{x,V}\}_{x \in I}$ be the two non-linear processes associated to $W$ and $V$ respectively. We compare the two solutions: as done in the proof of Proposition \ref{p:nonLin1}, by adding and subtracting in the integrals the term $W(x,y) \Gamma(\theta^{x,V}_r, \theta) (\mu^{y,W}_r-\mu^{y,V}_r)$ we get
  \begin{equation}
    \begin{split}
        &\left|\theta^{x,W}_s - \theta^{x,V}_s \right|^2 \leq C \int_0^s \left| F(\theta^{x,W}_r) - F(\theta^{x,V}_r ) \right|^2 \dd r \\
        & + C \int_0^s \left|\int_I W(x,y) \int_\bbT (\Gamma(\theta^{x,W}_r, \theta)- \Gamma(\theta^{x,V}_r, \theta))  \mu^{y,W}_r (\dd \theta) \, \dd y \right|^2 \dd r  \\
        & + C \int_0^s \left|\int_I W(x,y) \int_\bbT \Gamma(\theta^{x,V}_r, \theta) (\mu^{y,W}_r - \mu^{y,V}_r) (\dd \theta) \, \dd y \right|^2 \dd r  \\
        & + C \int_0^s \left| \int_I (W(x,y) - V(x,y)) \int_\bbT \Gamma(\theta^{x,V}_r, \theta) \mu^{y,V}_r (\dd \theta) \, \dd y \right|^2 \dd r.
    \end{split}
    \end{equation}
Using that $F$ and $\Gamma$ are Lipschitz-continuous functions and that $F, \, \Gamma$ and $W$ are bounded, we get
    \begin{align}
        \left|\theta^{x,W}_s - \theta^{x,V}_s\right|^2 \leq C\int_0^s \left|\theta^{x,W}_r - \theta^{x,V}_r \right|^2 \dd r + C \int_0^s d^2_r(\mu^{W}, \mu^{V}) \, \dd r \notag\\
        + \int_0^s \left| \int_I \left(W(x,y) - V(x,y) \right) \left( \int_\bbT \Gamma(\theta^{x,V}_s,\theta) \mu^{y,V}_r (\dd \theta)\right) \dd y \right|^2 \dd r.
    \end{align}
    After taking the supremum over $s\in[0,t]$, the expectation $\bE$ and integrating with respect to $x \in I$, we are able to apply Gronwall's lemma as in \eqref{nl:map} to get
    \begin{equation}
  \label{eq:proof_Holder_first_Gronwall}
        d^2_t(\mu^W, \mu^V) \leq \int_I \bE \left[ \sup_{s \in [0,t]} \left| \theta^{x,W}_s - \theta^{x,V}_s \right|^2 \right] \dd x \leq C \Big( \int_0^t d^2_s (\mu^W, \mu^V) \, \dd s + G \Big) ,
    \end{equation}
    where $G$ is given by
    \begin{equation}
    \label{eq:G}
    G = \int_0^t \bE \left[\int_I \left| \int_I \left(W(x,y) - V(x,y) \right) \left( \int_\bbT \Gamma(\theta^{x,V}_s,\theta) \mu^{y,V}_s (\dd \theta)\right) \dd y \right|^2 \dd x \right] \dd s.
    \end{equation}
    Applying Gronwall's inequality to \eqref{eq:proof_Holder_first_Gronwall} yields
    \begin{equation}
        d^2_t(\mu^W,\mu^V) \leq C G.
    \end{equation}
    The proof is concluded provided that $G \leq C' \norm{W-V}_\square$, for some constant $C'>0$.
    
    {\change We take advantage of the Fourier series representation of $\Gamma$. As we are working on the torus, we can factorize $\Gamma$ in its two components}, i.e.,
    \begin{equation}
    \label{gammaFourier}
        \Gamma (\theta, \psi) = \sum_{k,l \in \Z} \Gamma_{kl} \, e^{ik \theta} e^{il \psi}, \quad \theta, \psi \in \bbT,
    \end{equation}
    where $\Gamma_{kl} = \int_{\bbT^2} \Gamma(\theta, \psi) e^{i (k \theta +l \psi)} \dd \theta \dd \psi$. {\change The condition $\Gamma \in \cC^{1+\varepsilon}$ allows to have a decay estimate on the Fourier coefficients $\Gamma_{kl}$. Indeed,} classical results on the asymptotic of Fourier series \cite[pp. 24-26]{Katznelson} imply that
    \begin{equation}
    \label{gammaAbsConv}
        C_\Gamma := \sum_{k,l \in \Z} (kl)^{1+\varepsilon} \left| \Gamma_{kl} \right|^2< \infty.
    \end{equation}
    Plugging this expression into \eqref{eq:G}, we obtain that
    \begin{equation}
    \begin{split}
        &\int_I \left|\int_I \left(W(x,y) - V(x,y) \right) \left( \int_\bbT \Gamma(\theta^{x,V}_s,\theta) \mu^{y,V}_s (\dd \theta)\right) \dd y\right|^2 \dd x \\
        & = \int_I \left|\sum_{kl} \Gamma_{kl} \, e^{ik \theta^{x,V}_s} \, \int_I \left(W(x,y) - V(x,y) \right) \left( \int_\bbT e^{il\theta} \mu^{y,V}_s (\dd \theta)\right) \dd y\right|^2 \dd x.
    \end{split}
    \end{equation}
Multiplying and dividing by $(kl)^{(1+\varepsilon)/2}$  one is left with
\begin{equation}
    \label{intermedio}
    \begin{split}
        \leq &  \int_I \Bigg|\sum_{kl} \left( (kl)^{(1+\varepsilon)/2} \Gamma_{kl} \, e^{ik \theta^{x,V}_s} \right) \\
        &\left((kl)^{- (1+\varepsilon)/2} \int_I \left(W(x,y) - V(x,y) \right) \left( \int_\bbT e^{il\theta} \mu^{y,V}_s (\dd \theta)\right) \dd y\right) \Bigg|^2 \dd x. \\
        \leq &C_\Gamma \sum_{kl} (kl)^{-1-\varepsilon} \int_I \left|\int_I \left(W(x,y) - V(x,y) \right) \left( \int_\bbT e^{il\theta} \mu^{y,V}_s (\dd \theta)\right) \dd y\right|^2 \dd x \\
    \end{split}
    \end{equation}
where in the second step we have applied Cauchy-Schwartz inequality and \eqref{gammaAbsConv}. Using that $W$ and $V$ are bounded, as well as the fact that
    \begin{equation}
      \left|\int_I \left(W(x,y) - V(x,y) \right) \left( \int_\bbT e^{il\theta} \mu^{y,V}_s (\dd \theta)\right) \dd y\right| \leq 1,
    \end{equation}
    we conclude
    \begin{equation}
    \begin{split}
        G &\leq C \sup_{\norm{a}_\infty, \norm{b}_\infty \leq 1} \int_I \left|\int_I \left(W(x,y) - V(x,y) \right) \left( a(y) + i b(y) \right) \dd y\right| \dd x \\
        & \leq C \norm{W-V}_{\infty \to 1}.
    \end{split}
    \end{equation}
    Since the norm $\norm{\,\cdot\,}_{\infty\to 1}$ is equivalent to the cut-norm \eqref{d:cutnormW}, the proof is concluded.
\end{proof}

\subsection{Proofs for the non-linear process \eqref{eq:nonLin2}}
\label{pss:nonLin2}

\medskip

\begin{proof}[Proof of Theorem \ref{thm:nonLin2}]
    The first part follows directly from Proposition \ref{p:nonLin1} and Remark \ref{rem:varphi}. The proof of \eqref{eq:pde_mean} is similar to the proof of \eqref{eq:pde}, but note that we are now integrating with respect to the randomness in $U$ as well.
\end{proof}

\medskip

\begin{proof}[Proof of Theorem \ref{thm:sta2}]
    Let $\theta^{U,W}$ and $\theta^{U,V}$ be the two solutions to \eqref{eq:nonLin2} associated to $W$ and $V$ respectively, coupled by taking the same uniform random variable $U$. Let $\mu^{x,W}$ and $\mu^{x,V}$ represent the laws of $\theta^{U,W}$ and $\theta^{U,V}$ conditioned on $U=x$, for $x \in I$.
    
    Consider $\varphi \in S_I$ an invertible measure preserving map. Recall that $\theta^{\varphi(U),V}$ also satisfies equation \eqref{eq:nonLin2} with $V^\varphi$, see Remark \ref{rem:varphi}. We compare the trajectories $\theta^{U,W}$ and $\theta^{\varphi(U), V}$.
    
    Consider the difference between the equations satisfied by $\theta^{U,W}$ and $\theta^{\varphi(U), V}$, add and subtract the term $ W(U,y) \Gamma(\theta^{\varphi(U),V}_r, \theta) (\mu^{y,W}_r - \mu^{\varphi(y),V}_r)$ to obtain that

    \begin{equation}
    \begin{split}
        &\left|\theta^{U,W}_s - \theta^{\varphi(U),V}_s \right|^2 \leq C \int_0^s \left| F(\theta^{U,W}_r) - F(\theta^{\varphi(U),V}_r)\right|^2 \dd r \\
        & + C \int_0^s \left|\int_I W(U,y) \int_\bbT \Big(\Gamma(\theta^{\varphi(U),W}_r, \theta)- \Gamma(\theta^{\varphi(U),V}_r, \theta)\Big) \mu^{\varphi(y),W}_r (\dd \theta) \, \dd y \right|^2 \dd r  \\
        & + C \int_0^s \left|\int_I W(U,y) \int_\bbT \Gamma(\theta^{\varphi(U),V}_r, \theta) (\mu^{y,W}_r - \mu^{\varphi(y),V}_r) (\dd \theta) \, \dd y \right|^2 \dd r  \\
        & + C \int_0^s \left| \int_I (W(U,y) - V^\varphi(U,y)) \int_\bbT \Gamma(\theta^{\varphi(U),V}_r, \theta) \mu^{\varphi(y),V}_r (\dd \theta) \, \dd y \right|^2 \dd r.
    \end{split}
    \end{equation}
  The first two integrals on the r.h.s. are bounded by $C \int_0^s \left| \theta^{U,W}_r - \theta^{\varphi(U),V}_r\right|^2 \dd r $, using that $F$ and $\Gamma$ are Lipschitz-continuous.
  While the third integral in the r.h.s. can be estimated using \eqref{d:lips_dist} and the fact that $0 \leq W \leq 1$. Thus we get
    \begin{equation}
    \begin{split}
        & \left|\int_I W(U,y) \int_\bbT \Gamma(\theta^{\varphi(U),V}_r, \theta) (\mu^{y,W}_r - \mu^{\varphi(y),V}_r) (\dd \theta) \, \dd y \right|^2 \\
        & \leq \int_I D^2_r(\mu^{y, W}, \mu^{\varphi(y),V}) \, \dd y = d^2_r \left(\mu^W, (\mu^V)^\varphi\right),
    \end{split}
    \end{equation}
    where we have used the notation $(\mu^V)^\varphi$ for $\{\mu^{\varphi(y),V}\}_{y \in I}$.
    
    Taking the supremum over $s\in[0,t]$ and the expectation with respect to the Brownian motions, the initial conditions and the random variable $U$, we obtain
    \begin{equation}
    \label{nl:double_gron}
        \begin{split}
        \int_I \bE \left[ \sup_{s \in [0,t]} \left| \theta^{x,W}_s - \theta^{\varphi(x),V}_s \right|^2 \right] \dd x \leq & C \int_0^t \int_I \bE \left[ \sup_{r \in [0,s]} \left| \theta^{x,W}_r - \theta^{\varphi(x),V}_r \right|^2 \right] \dd x \, \dd s\\
        & + C \int_0^t d^2_s\left(\mu^W, (\mu^V)^\varphi\right) \dd s + C G,
        \end{split}
    \end{equation}
  %
    where $G$ is given by
    \begin{equation}
        G = \int_0^t \bE\left[ \int_I \left| \int_I (W(x,y) - V^\varphi(x,y)) \int_\bbT \Gamma(\theta^{\varphi(x),V}_s, \theta) \mu^{\varphi(y),V}_s (\dd \theta) \dd y \right|^2 \dd x \right]\dd s.
    \end{equation}
    In the proof of Proposition \ref{p:sta1} we proved the following estimates:
    \begin{equation}
    \begin{split}
        d^2_t \left(\mu^W, (\mu^V)^\varphi\right) & \leq \int_I \bE \left[ \sup_{s \in [0,t]} \left| \theta^{x,W}_s - \theta^{\varphi(x),V}_s \right|^2 \right] \dd x, \\
        G\, & \leq \, C' \norm{W- V^\varphi}_\square, \quad \text{ for some } C'>0.
    \end{split}
    \end{equation}
    Applying these bounds to \eqref{nl:double_gron} and using Gronwall's inequality twice as in the previous proof, yields
    \begin{equation}
        d^2_t \left(\mu^W, (\mu^V)^\varphi\right) \leq C \norm{W-V^\varphi}_\square.
    \end{equation}
    By taking the infimum with respect to $\varphi \in S_I$ and recalling the definition of the cut-distance \eqref{d:cut-distance} together with \eqref{eq:dis_inequality}, we obtain
    \begin{equation}
        D_t(\bar{\mu}^W, \bar{\mu}^V) \leq \tilde{d}_t \left(\mu^W, \mu^V\right) \leq C \, \delta_\square (W,V)^{1/2}.
    \end{equation}
  The proof is concluded.
\end{proof}

\section{Proof of Theorem \ref{thm:conv}}
\label{pss:empMes}
In order to prove Theorem \ref{thm:conv}, we couple the system \eqref{eq:g} to a sequence of identically distributed copies of the non-linear process $\theta$, which is obtained by sampling $\{U_i\}_{i \in \N}$ IID uniform random variables and choosing the same initial conditions and Brownian motions of \eqref{eq:g}.

For every $i \in \N$, denote these copies by $\theta^i = \theta (U_i)$. In particular, $\theta^i$ is defined as the solution for $t \in [0,T]$ to
\begin{equation}
\label{eq:coupledNonLin}
    \theta^i_t = \theta^i_0 + \int_0^t F(\theta^i_s) \dd s + \int_0^t \int_I W(U_i,y) \int_\bbT \Gamma(\theta^i_s, \theta) \mu^y_s (\dd \theta) \dd y \, \dd s + B^i_t. \\
\end{equation}
Observe that $\{\theta^i \}_{i \in \N}$ is an exchangeable sequence and, in particular, that the variables $\theta^i$ are independent random variables when conditioned on the randomness of $W$.


Before the proof of Theorem \ref{thm:conv}, we give a trajectorial estimate.
\begin{lem}
    \label{lem:trajEst}
    Under the hypothesis of Theorem \ref{thm:conv}, it holds that
    \begin{equation}
        \lim_{n \to \infty} \bbE \times \bE \left[ \frac 1n \sum_{i=1}^n \sup_{t \in [0,T]} \left| \theta^{i,n}_t - \theta^i_t \right|^2 \right] = 0.
    \end{equation}
\end{lem}

\begin{proof}
As done before, we compare the trajectories $\theta^{i,n}$ and $\theta^i$, by studying the equation satisfied by $\left|\theta^{i,n}_s - \theta^i_s \right|^2$, recall \eqref{eq:g} and \eqref{eq:coupledNonLin}. Add and subtract in the integrals the term $ \left( \xi^{(n)}_{ij} - W(U_i, U_j) \right) \Gamma(\theta^i_r, \theta^j_r)$ so as to get
    \begin{equation}
    \begin{split}
        \left|\theta^{i,n}_s - \theta^i_s \right|^2 & \leq C \int_0^s \left| F(\theta^{i,n}_r) - F(\theta^i_r) \right|^2 \dd r \\
        & + C \int_0^s \left| \frac 1n \sum_{j=1}^n \xi^{(n)}_{ij} \left(\Gamma(\theta^{i,n}_r, \theta^{j,n}_r) - \Gamma(\theta^i_r, \theta^j_r) \right) \right|^2 \dd r \\
        & + C \int_0^s \left| \frac 1n \sum_{j=1}^n \left( \xi^{(n)}_{ij} - W(U_i, U_j) \right) \Gamma(\theta^i_r, \theta^j_r) \right|^2  \dd r \\
        & + C \int_0^s \left| \frac 1n \sum_{j=1}^n W(U_i, U_j) \Gamma(\theta^i_r, \theta^j_r) -  \int_I W(I_i, y) \int_\bbT \Gamma(\theta^i_r, \theta) \mu^y_r (\dd \theta) \, \dd y \right|^2 \dd r.
    \end{split}
    \end{equation}
    We now use the Lipschitz property of $\Gamma$ and $F$, sum over $i$ and take the supremum over $s\in[0,t]$, together with the expectation $\bbE \times \bE$, which we just write $E$ for simplicity,
        \begin{equation}
    \begin{split}
        &E \left[\frac 1n \sum_{i=1}^n \sup_{s \in [0,t]} \left|\theta^{i,n}_s - \theta^i_s \right|^2\right] \leq  C \int_0^t E\left[ \frac 1n \sum_{i=1}^n \sup_{q \in [0,r]} \left| \theta^{i,n}_q - \theta^i_q \right|^2\right] \dd r \\
        &+ C \int_0^t E \left[\frac 1n \sum_{i=1}^n \left| \frac 1n \sum_{j=1}^n \left( \xi^{(n)}_{ij} - W(U_i, U_j) \right) \Gamma(\theta^i_r, \theta^j_r) \right|^2\right]  \dd r \\
        &+ C \int_0^t \frac 1n \sum_{i=1}^n E \left[\left| \frac 1n \sum_{j=1}^n W(U_i, U_j) \Gamma(\theta^i_r, \theta^j_r) -  \int_I W(U_i, y)\int_\bbT \Gamma(\theta^i_r, \theta) \mu^y_r (\dd \theta) \, \dd y \right|^2\right] \dd r.
    \end{split}
    \end{equation}
    Observe that the last term is bounded by a constant divided by $n$ since by taking the conditional expectation with respect to $\theta^j$ and $U^j$, one obtains {\change for $i\neq j$}
    \begin{equation}
        \bE \left[ W(U_i, U_j) \Gamma(\theta^i_s, \theta^j_s) \right] = \int_I W(U_i, y) \int_\bbT \Gamma(\theta^i_s, \theta) \mu^y_s (\dd \theta) \, \dd y
    \end{equation}
    and, conditionally on $W$, the random variables $\{\theta^i\}_{i \in \N}$ are IID.
    
    Turning to the second term, we will prove that
    \begin{equation}
    \label{conv:graph}
        E \left[\frac 1n \sum_{i=1}^n \left| \frac 1n \sum_{j=1}^n \left( \xi^{(n)}_{ij} - W(U_i, U_j) \right) \Gamma(\theta^i_s, \theta^j_s) \right|^2\right]  \leq C \, \bbE \left[ \delta_\square (\xi^{(n)}, W^{(n)})\right] + o(1),
    \end{equation}
    where $W^{(n)}:=\{W(U_i,U_j)\}_{i,j = 1, \dots, n}$ is a $W$-random graph with $n$ vertices, see \eqref{d:WrandomGraph}.
    This, together with a Gronwall argument implies that
    \begin{equation}
        E \left[\frac 1n \sum_{i=1}^n \sup_{s \in [0,t]} \left|\theta^{i,n}_t - \theta^i_t \right|^2\right] \leq C \, \bbE \left[ \delta_\square (\xi^{(n)}, W^{(n)})\right] + o(1)
    \end{equation}
    and the claim follows by taking the limit for $n$ which tends to infinity and the fact that $W^{(n)}$ converges $\bbP$-a.s.~to $W$, recall Theorem \ref{thm:DJ}.
    
    \medskip
    
    Turning to \eqref{conv:graph}, we use an argument similar to \eqref{eq:G}--\eqref{gammaAbsConv}. Recall that since $\Gamma \in \cC^{1+\varepsilon}$, it admits a Fourier series \eqref{gammaFourier} with coefficients $\Gamma_{kl}$ such that
    \[
    \sum_{k,l \in \Z} (kl)^{1+\varepsilon} |\Gamma_{kl}|^2 < \infty.
    \]
    Plugging its Fourier expression in the left-hand side of \eqref{conv:graph}, multiplying and dividing by $(kl)^{(1+\varepsilon)/2}$, we get
    \begin{equation}
    \begin{split}
        &E \left[\frac 1n \sum_{i=1}^n \left| \frac 1n \sum_{j=1}^n \left( \xi^{(n)}_{ij} - W(U_i, U_j) \right) \Gamma(\theta^i_s, \theta^j_s) \right|^2\right] \\
        &= E \left[\frac 1n \sum_{i=1}^n \left|  \frac 1n \sum_{j=1}^n \left( \xi^{(n)}_{ij} - W(U_i, U_j) \right) \sum_{k,l} \Gamma_{kl} e^{i \theta^i_s k} e^{i\theta^j_s l} \right|^2\right] \\
        &\leq C E \left[ \sum_{k,l} (kl)^{-1-\varepsilon} \frac 1n \sum_{i=1}^n \left|  \frac 1n \sum_{j=1}^n \left( \xi^{(n)}_{ij} - W(U_i, U_j) \right)  e^{i \theta^i_s k} e^{i\theta^j_s l} \right|^2\right],
    \end{split}
    \end{equation}
    where we have used Cauchy-Schwartz inequality as in the proof of Theorem \ref{thm:sta2}. Observe that $\sum_{kl} (kl)^{-1-\varepsilon}$ is convergent and that $\left| e^{i \theta^i_s k} \right| \leq 1$ for all $k$ and $s$: we can thus bound $\bP$-a.s. the previous term by
    \begin{equation}
        \bbE \left[ \sup_{s_i,t_j \in \{\pm 1\}} \left|  \frac 1{n^2} \sum_{i,j=1}^n \left( \xi^{(n)}_{ij} - W(U_i, U_j) \right)  s_i t_j \right|\right].
  \end{equation}
Recall that $W^{(n)} = \{W(U_i,U_j)\}_{i,j = 1, \dots, n}$ is a $W$-random graph with $n$ vertices. Since the particles $\{\theta^i\}_{i \in \N}$ are exchangeable, every computation done so far holds no matter the order of $\{\theta^i\}_{i=1, \dots, n}$ and, in particular, of $\{U^i\}_{i =1, \dots, n}$. In particular, the last inequality holds for every relabeling of $W^{(n)}$.
    
    From the definition of $\hat{\delta}_\square$ \eqref{d:cut-distanceGraph}, one can thus take the labeling of $\{U_i\}_{i =1, \dots, n}$ for every $n \in \N$, such that
    \begin{equation}
            \bbE \left[ \sup_{s_i,t_j \in \{\pm 1\}} \left|  \frac 1{n^2} \sum_{i,j=1}^n \left( \xi^{(n)}_{ij} - W(U_i, U_j) \right)  s_i t_j \right|\right] = \bbE \left[ \hat{\delta}_\square (\xi^{(n)}, W^{(n)}) \right].
    \end{equation}
    Using the asymptotic equivalence of $\hat{\delta}_\square$ with $\delta_\square$, see Remark \ref{rem:cut-distance}, the claim is proved and the proof is concluded.
\end{proof}

\medskip

\begin{proof}[Proof of Theorem \ref{thm:conv}]
    The equivalence between the convergence in $\bbP$-probability  of $\xi^{(n)}$ and equation \eqref{eq:graphConv} is proven in Lemma \ref{lem:graphConv}. We turn to the proof of the convergence of $\mu^n$.
    
    It is well known that the bounded Lipschitz distance, recall \eqref{d:lips_dist}, metricizes the weak convergence and defines a distance between probability measures. In particular, in order to show that $\mu^n$ converges in $\bP \times \bbP$-probability to $\bar{\mu}$ in $\cP(\cC([0,T], \bbR))$, it is enough to prove that
    \begin{equation}
    \lim_{n \to \infty} \bbE \times \bE \left[\left| \int f (\theta) \mu^n (\dd \theta)- \int f (\theta) \bar{\mu} (\dd \theta)\right| \right]  =0,
    \end{equation}
    for every $f$ bounded and Lipschitz function with values in $\cC([0,T], \bbR)$.
    
    Using the fact that $\bar{\mu}$ is the law of $\{\theta^i\}_{i \in \N}$ (recall \eqref{eq:coupledNonLin}), it is enough to show that
    \begin{equation}
        \lim_{n \to \infty} \frac 1n \sum_{j=1}^n \bbE \times \bE \left[\left|f(\theta^{j,n}) - f(\theta^j)\right|\right] = 0.
    \end{equation}
    This is implied by the fact that $f$ is Lipschitz and by Jensen's inequality. Indeed,
    \begin{equation}
    \frac 1n \sum_{j=1}^n \bbE \times \bE \left[\left|f(\theta^{j,n}) - f(\theta^j)\right|\right] \leq  \bbE \times \bE \left[\frac 1n \sum_{j=1}^n \sup_{t \in [0,T]} \left| \theta^{j,n}_t - \theta^j_t \right|^2 \right] ^{1/2},
    \end{equation}
    which goes to zero as $n\to\infty$ by Lemma \ref{lem:trajEst}.
\end{proof}

\appendix

\section{Graph convergence and random graphons}
\label{annex:graph}

\subsection{Convergence in probability}
\label{pss:graphs}
The characterization of the convergence in distribution for a sequence of graphs has been originally given in \cite{cf:DJ08}. We give here a useful notion of convergence in $\tilde{\cW}_0$ by means of the cut-distance $\delta_\square$, which is equivalent to the convergence in probability for graph sequences.

\begin{lem}
    \label{lem:graphConv}
    Assume that $\{\xi^{(n)}\}_{n \in \N}$ is a sequence of random graphs and $W$ a random graphon in $\tilde{\cW}_0$. Then, $\xi^{(n)}$ converges in $\bbP$-probability to $W$ if and only if \eqref{d:graphConv} holds, i.e., if and only if
    \begin{equation*}
    \lim_{n \to \infty} \bbE \left[ \delta_\square \left(\xi^{(n)}, W \right)\right] = 0.
    \end{equation*}
\end{lem}

\begin{proof}
    Recall that $(\tilde{\cW}_0, \delta_\square)$ is a compact metric space, so that the convergence of $\xi^{(n)}$ in probability is equivalent to
    \begin{equation}
    \label{d:convProGraph}
    \forall \varepsilon >0, \quad \lim_{n \to \infty} \bbP \left( \delta_{\square} (\xi^{(n)}, W) > \varepsilon \right) = 0.
    \end{equation}

    Observe that the sequence of positive real random variables $\{ \delta_{\square} (\xi^{(n)}, W)\}_{n \in \N}$ is uniformly bounded by 1. Equation \eqref{d:convProGraph} is then equivalent to the convergence in $L^1$, i.e., equivalent to \eqref{d:graphConv}.
\end{proof}

\bigskip

\section*{Acknowledgements}
FC is thankful to his supervisor  Giambattista Giacomin for insightful discussions and advice. FC acknowledges the support from the European Union’s Horizon 2020 research and innovation programme under the Marie Sk\l odowska-Curie grant agreement No 665850. FRN was partially supported by the Netherlands Organisation for Scientific Research (NWO) [Gravitation Grant number 024.002.003--NETWORKS].

\bibliographystyle{abbrv}
\bibliography{biblio}
\end{document}